\documentclass[12pt,numbers,sort&compress]{elsarticle}
\journal{}

\makeatletter
\def\ps@pprintTitle{%
 \let\@oddhead\@empty
 \let\@evenhead\@empty
 \def\@oddfoot{\hfill\thepage}%
 \let\@evenfoot\@oddfoot}
\makeatother

\usepackage{amsmath,amssymb,amsfonts,amsthm,graphicx}
\usepackage[bookmarksnumbered,colorlinks=true]{hyperref}
\usepackage[labelfont=bf]{caption}

\providecommand{\doi}[1]{\href{https://doi.org/#1}{DOI:#1}}
\usepackage{xurl} 
\renewcommand{\doi}[1]{%
 \href{https://doi.org/#1}{\nolinkurl{DOI:#1}}%
}

\usepackage{dsfont} 
\usepackage{enumitem} 
\usepackage{mathtools} 
\usepackage{geometry} 
\theoremstyle{plain}
\newtheorem{theorem}{Theorem}

\newtheorem{corollary}{Corollary}

\theoremstyle{definition}
\newtheorem{remark}{Remark}[section]

\newcommand{\N}{\mathbb{N}}
\newcommand{\R}{\mathbb{R}}

\newcommand{\EE}{\mathsf{E}}
\newcommand{\Var}{\mathsf{Var}}
\newcommand{\Cov}{\mathsf{Cov}}
\newcommand{\bb}[1]{\boldsymbol{#1}}
\newcommand{\rd}{\mathrm{d}}
\newcommand{\e}{\varepsilon}
\newcommand{\tr}{\mathrm{tr}}

\newcommand{\diag}{\operatorname{diag}}

\newcommand{\haf}{\operatorname{haf}}

\allowdisplaybreaks

\begin{document}

\begin{frontmatter}

\title{A proof of the strong Gaussian product inequality conjecture}

\author[a1]{Fr\'ed\'eric Ouimet\corref{mycorrespondingauthor}}
\author[a2]{Dylan Greaves}

\address[a1]{Universit\'e du Qu\'ebec \`a Trois-Rivi\`eres, Trois-Rivi\`eres, Canada}
\address[a2]{San Francisco, USA\vspace{-7mm}}

\cortext[mycorrespondingauthor]{Corresponding author. Email address: frederic.ouimet2@uqtr.ca}

\begin{abstract}
Let $\bb{X} = (X_1,\ldots,X_n)$ be a centered Gaussian vector, not necessarily nondegenerate. It is proved that, for every $\alpha_1,\ldots,\alpha_n > 0$,
\[
\EE\left[\prod_{i=1}^n |X_i|^{\alpha_i}\right] \geq \prod_{i=1}^n \EE\left[|X_i|^{\alpha_i}\right].
\]
When all marginal variances are positive, equality holds if and only if the coordinates are independent. This settles Frenkel's 18-year-old Gaussian product inequality (GPI) conjecture and, in fact, its later strengthening to arbitrary positive exponents. Through Frenkel's hafnian formulation, this result also provides a short proof of the 28-year-old real linear polarization constant conjecture, which was settled this year as a consequence of the strong polarization inequality. The main result leads to the exact real linear polarization constant, a sharp weighted product inequality for real linear functionals, spherical moment bounds, hafnian inequalities for positive-semidefinite matrices together with complete characterizations of their equality cases, logarithmic variance and covariance inequalities, a R\'enyi total-correlation certificate, and unconditional versions of mixed-sign Gaussian moment bounds previously conditional on the positive-exponent GPI.
\end{abstract}

\begin{keyword} 
Gaussian product inequality \sep Gaussian random vector \sep hafnian \sep divergence \sep mixed-sign moments \sep real linear polarization constant \sep strong polarization inequality
\MSC[2020]{Primary: 60E15, 60E05; Secondary: 15A15, 44A10, 46C05, 46N30, 60G15}
\end{keyword}

\end{frontmatter}

\section{Introduction}\label{sec:intro}

The Gaussian product inequality (GPI) conjecture grew out of the real linear polarization constant problem in functional analysis. A norm-one linear functional on a Hilbert space attains its norm on the unit ball, but several norm-one functionals need not be simultaneously large at the same point. The real linear polarization constant measures the worst possible loss in their product when one common point must be used. For continuous real linear functionals $f_1,\ldots,f_n$ on a real Hilbert space $\mathcal{H}$, \citet{MR1636556} sought the optimal universal lower bound for the norm of the homogeneous polynomial $\prod_{i=1}^n f_i$. When $\dim(\mathcal{H}) \geq n$, their conjecture states that
\begin{equation}\label{eq:RLPC.conjecture}
\left\|\prod_{i=1}^n f_i\right\| \geq n^{-n/2}\prod_{i=1}^n \|f_i\|.
\end{equation}
Orthogonal functionals show that the factor $n^{-n/2}$ in \eqref{eq:RLPC.conjecture} cannot be improved.

\newpage
The corresponding problem over complex Hilbert spaces was solved by \citet{AriasdeReyna1998}, whereas the real problem remained open for nearly three decades. As detailed in Section~\ref{sec:consequences}, \citet{MartinezOrtegaMoreno2026} recently proved the strong polarization conjecture, a reciprocal-square strengthening of \eqref{eq:RLPC.conjecture}, and characterized all extremal configurations for the real linear polarization problem; \citet{GalicerOrtegaMorenoPinasco2026} subsequently proved a weighted extension.

In 2008, \citet{MR2385646} formulated the GPI conjecture, which states that for every centered Gaussian vector with covariance matrix $\Sigma = (\sigma_{ij})$ and every $m \in \N \equiv \{1,2,\ldots\}$,
\begin{equation}\label{eq:original.GPI}
\EE\left[\prod_{i=1}^n X_i^{2m}\right] \geq \prod_{i=1}^n \EE\left[X_i^{2m}\right].
\end{equation}
By Wick's formula, this takes the equivalent hafnian form $\haf\left(\Sigma[2m]\right) \geq \prod_{i=1}^n (2m - 1)!!\,\sigma_{ii}^m$, where $\Sigma[2m]$ is the matrix obtained by repeating each row index and its corresponding column index exactly $2m$ times. Frenkel observed that its validity for all positive integers $m$ would imply the sharp real linear polarization estimate in \eqref{eq:RLPC.conjecture}. The implication follows by representing the functionals as Gaussian linear forms, comparing their $2m$th moments with the radial moment of a standard Gaussian vector, and letting $m \to \infty$. \citet{MR2886380} subsequently proposed a stronger conjecture allowing arbitrary positive exponents:
\begin{equation}\label{eq:strong.GPI}
\EE\left[\prod_{i=1}^n |X_i|^{\alpha_i}\right] \geq \prod_{i=1}^n \EE\left[|X_i|^{\alpha_i}\right], \qquad \alpha_1,\ldots,\alpha_n > 0.
\end{equation}
The strong GPI conjecture \eqref{eq:strong.GPI} is proved in Theorem~\ref{thm:strong.GPI}; this establishes both the original GPI conjecture \eqref{eq:original.GPI} and Frenkel's equivalent hafnian formulation.

Before the present work, partial positive-exponent GPI results (for given dimensions $n \leq 5$, restricted classes of covariance matrices, specific exponents, or a combination thereof) were obtained by \cite{MR628759, MR2385646, MR4052574, MR4466643, MR4445681, MR4554766, MR4760098, MR4593134, MR4661091, MR4798604, MR4794515, Hirose2026}. Negative-exponent and mixed-sign extensions were obtained by \cite{MR3278931, MR4471184, MR4530374, MR4666255, MR4956548, MR5042459, LanOuimetSun2025}.
Closely related Wishart extensions appear in \cite{MR4538422, GOR2024EJP, arXiv:2409.14512}. 
A chronological description of these contributions (except for \cite{MR4956548,MR5042459,LanOuimetSun2025,Hirose2026}) is given by \citet{GOR2024EJP}. The proof of the strong GPI herein uses methods entirely different from those developed in the GPI works cited above, so their approaches are not reviewed here. Although the proof does not use the strong polarization inequality proved by \citet{MartinezOrtegaMoreno2026} or its weighted extension proved by \citet{GalicerOrtegaMorenoPinasco2026}, one specialization of its central matrix identity recovers the cancellation underlying their proofs; see Section~\ref{sec:connection}.

The rest of the paper is organized as follows. Section~\ref{sec:main.result} proves the strong GPI along with the characterization of equality in terms of independence. Section~\ref{sec:connection} details the connection with the works of \cite{MartinezOrtegaMoreno2026,GalicerOrtegaMorenoPinasco2026}. Corollaries of the strong GPI are collected in Section~\ref{sec:consequences}.

\section{Main result}\label{sec:main.result}

\begin{theorem}[Strong GPI]\label{thm:strong.GPI}
Let $\bb{X} = (X_1,\ldots,X_n)$ be a centered Gaussian vector, not necessarily nondegenerate. Then, for every $\alpha_1,\ldots,\alpha_n > 0$,
\begin{equation}\label{eq:thm:strong.GPI}
\EE\left[\prod_{i=1}^n |X_i|^{\alpha_i}\right] \geq \prod_{i=1}^n \EE\left[|X_i|^{\alpha_i}\right].
\end{equation}
Moreover, if $\Var(X_i) > 0$ for every $i$, then equality in \eqref{eq:thm:strong.GPI} holds if and only if $X_1,\ldots,X_n$ are independent.
\end{theorem}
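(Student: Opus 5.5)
The plan is an interpolation argument combined with Gaussian integration by parts. Reduction first. By continuity of both sides in $\Sigma$, using dominated convergence or Fatou, it suffices to treat nondegenerate $\Sigma$ with $\sigma_{ii}>0$. Both sides are homogeneous in each coordinate, so I may normalize $\sigma_{ii}=1$. Replacing $X_i$ by $\pm X_i$ changes neither side, and I will keep that freedom available. Write $F(x)=\prod_{i}|x_i|^{\alpha_i}$, let $D=\diag(\Sigma)=I$, and set $\Sigma(s)=D+s(\Sigma-D)$ for $s\in[0,1]$. Each $\Sigma(s)$ is positive definite, being a convex combination of $I$ and $\Sigma$. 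Put $\Phi(s)=\EE_{\Sigma(s)}[F(\bb X)]$, so that $\Phi(0)$ is the right-hand side of \eqref{eq:thm:strong.GPI}. The goal is $\Phi(1)\ge\Phi(0)$, and the hope is to show $\Phi'(s)\ge 0$ or, failing that, $\Phi(1)-\Phi(0)\ge 0$ through an integrated identity.

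Second step: differentiate. The heat-equation identity $\partial_{\sigma_{ij}}\EE[F]=\EE[\partial_i\partial_j F]$ for $i\neq j$ gives
\[
\Phi'(s)=\sum_{i\neq j}\sigma_{ij}\,\alpha_i\alpha_j\,\EE_{\Sigma(s)}\!\left[\frac{F(\bb X)}{X_iX_j}\right].
\]
If some $\alpha_i\le 2$ this must be justified by truncation or mollification of $|x|^{\alpha}$, since the mixed derivative is locally integrable in the Gaussian measure. I then rewrite the off-diagonal sum using Stein's identity $\EE[X_i g(\bb X)]=\sum_j \Sigma(s)_{ij}\EE[\partial_j g(\bb X)]$ with $g=F/X_i$. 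This yields
\[
\EE[F]=(\alpha_i-1)\,\EE\!\left[\frac{F}{X_i^2}\right]+s\sum_{j\neq i}\sigma_{ij}\alpha_j\,\EE\!\left[\frac{F}{X_iX_j}\right].
\]
Multiplying by $\alpha_i$ and summing over $i$ gives
\[
s\,\Phi'(s)=\sum_i \alpha_i\Big(\EE[F]-(\alpha_i-1)\EE[F/X_i^2]\Big).
\]
This is the matrix cancellation step, and it should be the analogue of the reciprocal-square cancellation in the strong polarization inequality. The whole problem is thereby reduced to the tilted-measure inequality
\[
\sum_i \alpha_i(\alpha_i-1)\,\EE_{Q_s}[X_i^{-2}]\le \sum_i\alpha_i,
\qquad \text{where } \rd Q_s\propto F\,\rd N(0,\Sigma(s)).
\]
In the independent case this holds with equality, because $\EE|Z|^{\alpha-2}/\EE|Z|^{\alpha}=1/(\alpha-1)$. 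Terms with $\alpha_i<1$ are harmless, since their summands have the favourable sign.

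Main obstacle. The difficulty is bounding $\EE_{Q_s}[X_i^{-2}]$ for $\alpha_i>1$ in the correlated case. A naive conditioning on $X_{-i}$ fails: the conditional variance of $X_i$ is smaller than $1$, which pushes the conditional ratio the wrong way. My first attempt would be to bound only the sum, not each term separately. Under $Q_s$ the vector $\bb Y=(\alpha_i/X_i)_i$ plays the role of the reciprocal vector in the polarization problem. I would try to apply integration by parts once more, to $\EE_{Q_s}[\bb Y^{\top}\Sigma(s)\bb Y]$, and combine this with the positive semidefiniteness of $\Sigma(s)^{-1}$ via $\bb Y^{\top}\Sigma\bb Y\,\bb X^{\top}\Sigma^{-1}\bb X\ge(\bb Y^{\top}\bb X)^2=(\sum\alpha_i)^2$. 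The aim is the trace-type bound above. If this does not close, the fallback is to integrate $s\Phi'(s)$ over $s$ and compare directly with $\Phi(1)-\Phi(0)$.

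Equality. Suppose all variances are positive and some $\sigma_{ij}\neq 0$. Then I expect the Cauchy--Schwarz step to be strict on a set of positive $Q_s$-measure, which makes $\Phi'>0$ for $s$ near $0$. Indeed, the $s\to 0$ expansion $\Phi(s)=\Phi(0)+\tfrac{s^2}{2}\sum_{i\neq j}\sigma_{ij}^2\alpha_i^2\alpha_j^2(\cdots)+o(s^2)$ has a strictly positive second-order term. Hence equality in \eqref{eq:thm:strong.GPI} forces $\Sigma$ to be diagonal, and for a Gaussian vector this is exactly independence.
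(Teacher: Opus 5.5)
\textbf{There is a genuine gap at what you call the main obstacle, and the steps before it do not reduce the problem.}

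With the correct constant, $\Phi'(s)=\frac12\sum_{i\neq j}\sigma_{ij}\alpha_i\alpha_j\EE[F/(X_iX_j)]$ when summing over ordered pairs. Your summed Stein identity then reads
\[
\frac{2s\,\Phi'(s)}{\Phi(s)}=\sum_i\alpha_i-\sum_i\alpha_i(\alpha_i-1)\,\EE_{Q_s}\bigl[X_i^{-2}\bigr],
\]
so the tilted-measure inequality is just $\Phi'(s)\ge 0$ rewritten.

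That is monotonicity of the product moment along the whole segment from $I$ to $\Sigma$. Equivalently, mixing in independent noise while preserving unit variances never increases the product moment. This is a priori stronger than the theorem, and nothing in the proposal proves it.

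The tool you suggest points the wrong way. By the same Stein identities (when all $\alpha_i>1$), $\EE_{Q_s}[\bb{Y}^{\top}\Sigma(s)\bb{Y}]=\sum_i\alpha_i+\sum_i\alpha_i\EE_{Q_s}[X_i^{-2}]$. So the Cauchy--Schwarz lower bound on $\bb{Y}^{\top}\Sigma(s)\bb{Y}$ can only produce a \emph{lower} bound on reciprocal second moments, whereas you need an \emph{upper} bound. Integrating $s\Phi'(s)$ back over $s$ is circular.

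The equality part inherits the gap, since the second-order expansion only gives $\Phi(s)>\Phi(0)$ for small $s$.

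There is also a technical error for $\alpha_i\le 1$. There the $x_i$-derivative of $F/x_i$ is not locally integrable and $\EE[F/X_i^2]=+\infty$. Stein's identity must then be read distributionally: a point mass at $\alpha_i=1$, a finite-part regularization for $\alpha_i<1$. So those terms are not harmless and have no evident sign.

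\textbf{What is missing is an \emph{exact} averaged identity for reciprocal products, and the paper's proof is built around one.} It never differentiates in $\Sigma$. Instead it proves the kernel bound $K_R(\bb{c})=\EE[\exp(-\frac12\sum_i c_i^2/X_i^2)]\ge e^{-\sum_i c_i}$ for a correlation matrix $R$. It then recovers the moments via $|x|^{\alpha}=\frac{2^{1-\alpha/2}}{\Gamma(\alpha/2)}\int_0^\infty t^{\alpha-1}e^{-t^2/(2x^2)}\,\rd t$ and Tonelli.

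For the kernel bound it substitutes $\bb{y}=\bb{x}-R\bb{d}(\bb{x})$ with $d_i=c_i/x_i$. On each orthant this map is a diffeomorphism onto $\R^n$, because preimages are the unique minimizers of a strictly convex potential. Hence each $\bb{y}$ has one preimage per orthant, with Jacobian weight $w_b$.

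An orthogonal matrix built from the principal minors of $R$ yields $\sum_b w_b=1$ and $\sum_b w_b\bb{d}_b\bb{d}_b^{\top}=\diag(\bb{c})$. That is, the reciprocal products $c_ic_j/(x_{b,i}x_{b,j})$ average exactly to $c_i\delta_{ij}$ over the branches. Jensen's inequality across branches, together with $R_{ii}=1$, then gives the kernel bound. Strictness in Jensen, plus a separate boundary argument for singular $R$, gives the equality case. No monotonicity along an interpolation path is ever needed.
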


\newpage
\begin{proof}
Let $\sigma_i^2 = \Var(X_i)$. If $\sigma_i = 0$ for some $i$, then $X_i = 0$ almost surely (a.s.) and both sides of \eqref{eq:thm:strong.GPI} are zero. One may therefore assume that $\sigma_i > 0$ for every $i$. Let $Y_i = X_i/\sigma_i$ and let $R$ be the covariance matrix of $\bb{Y}$. Then $R$ is a positive-semidefinite correlation matrix, and both sides of \eqref{eq:thm:strong.GPI} for $\bb{X}$ are the corresponding sides for $\bb{Y}$ multiplied by the same factor $\prod_{i=1}^n \sigma_i^{\alpha_i}$. Thus, after relabeling $\bb{Y}$ as $\bb{X}$, one may assume that $\bb{X} \sim \mathcal{N}(\bb{0},R)$, where $R$ is a positive-semidefinite correlation matrix.

Assume first that $R$ is positive definite. Fix $c_1,\ldots,c_n > 0$. For $\bb{x} \in (\R \setminus \{0\})^n$, define
\[
d_i(\bb{x}) = \frac{c_i}{x_i}, \qquad
\bb{d}(\bb{x}) = \bigl(d_1(\bb{x}),\ldots,d_n(\bb{x})\bigr)^{\top}, \qquad
F(\bb{x}) = \bb{x} - R\bb{d}(\bb{x}).
\]
For $\bb{y} \in \R^n$, consider
\[
W_{\bb{y}}(\bb{d}) = \frac{1}{2}\bb{d}^{\top}R\bb{d} + \bb{y}^{\top}\bb{d} - \sum_i c_i\log(|d_i|), \qquad
\nabla^2 W_{\bb{y}}(\bb{d}) = R + \diag(c_i/d_i^2) > 0.
\]
On each open orthant, $W_{\bb{y}}(\bb{d}) \to +\infty$ whenever $\|\bb{d}\| \to \infty$ or $\bb{d}$ approaches the boundary of the orthant. Thus, its restriction to each orthant has compact sublevel sets and is strictly convex, so it has a unique minimizer there. Index the orthants by $b$ and denote the corresponding minimizers by $\bb{d}_b = \bb{d}_b(\bb{y})$. Their stationarity equations are
\begin{equation}\label{eq:branch.equation}
\frac{c_i}{d_{b,i}} - (R\bb{d}_b)_i = y_i, \qquad i \in \{1,\ldots,n\}.
\end{equation}
Set $x_{b,i} = c_i / d_{b,i}$ with $\bb{x}_b = (x_{b,1},\ldots,x_{b,n})^{\top}$. The coordinatewise transformation $d_i \mapsto c_i/d_i$ maps each orthant onto itself, and \eqref{eq:branch.equation} is precisely $F(\bb{x}_b) = \bb{y}$. Consequently, for every $\bb{y} \in \R^n$, this equation has exactly one solution in each orthant.

Since the Hessian is nonsingular, the implicit function theorem shows that each $\bb{d}_b$, and hence each $\bb{x}_b$, depends smoothly on $\bb{y}$. It follows that the restriction of $F$ to each orthant is a $C^1$ diffeomorphism onto $\R^n$. Write $D_b = \diag(d_{b,i}^2/c_i)$. Then
\[
I + R D_b = D_b^{-1/2} (I + D_b^{1/2} R D_b^{1/2}) D_b^{1/2},
\]
so $\det(I + R D_b) > 0$. Set
\begin{equation}\label{eq:branch.weight}
w_b = \det\!\left(I + R\diag(d_{b,i}^2/c_i)\right)^{-1}.
\end{equation}
Since $DF(\bb{x}_b) = I + R D_b$, the quantity $w_b$ is positive and equals the Jacobian $\left|\det\!\left(\partial \bb{x}_b/\partial \bb{y}\right)\right|$.

Subtracting \eqref{eq:branch.equation} for two distinct branches $b$ and $b'$ gives, in matrix form,
\[
\left\{\diag\left(\frac{c_i}{d_{b,i}d_{b',i}}\right) + R\right\}(\bb{d}_b - \bb{d}_{b'}) = \bb{0}.
\]
Since $\bb{d}_b \neq \bb{d}_{b'}$ and $\diag(d_{b,i}d_{b',i}/c_i)$ is invertible, one may right-multiply the matrix in braces by this diagonal matrix and take determinants to obtain
\[
\det\!\left(I + R\diag\left(\frac{d_{b,i}d_{b',i}}{c_i}\right)\right) = 0, \qquad b \neq b',
\]
whereas for $b = b'$ the same determinant equals $w_b^{-1}$.

\newpage
Given any subset $S \subseteq [n] \equiv \{1,\ldots,n\}$, let $d_{b,S} = \prod_{i \in S}d_{b,i}$ and $c_S = \prod_{i \in S}c_i$, with $\prod_{\emptyset} = 1$, and let $\det(R_{\emptyset\emptyset}) = 1$. For $\Lambda = \diag(\lambda_i)$, the principal-minor identity
\[
\det(I + R \Lambda) = \sum_{S \subseteq [n]}\det(R_{SS})\prod_{i \in S} \lambda_i
\]
therefore shows that the square matrix
\[
U_{bS} = \sqrt{w_b}\sqrt{\frac{\det(R_{SS})}{c_S}}d_{b,S}
\]
satisfies $UU^{\top} = I$ (and thus $U^{\top} U = I$). Indeed, this is verified by the row inner products
\begin{equation*}
\begin{aligned}
(UU^{\top})_{bb'}
= \sum_{S \subseteq [n]} U_{bS} U_{b'S}
&= \sqrt{w_b w_{b'}} \sum_{S \subseteq [n]} \det(R_{SS}) \prod_{i \in S} \frac{d_{b,i} d_{b',i}}{c_i} \\
&= \sqrt{w_b w_{b'}} \det\!\left(I + R\diag\left(\frac{d_{b,i}d_{b',i}}{c_i}\right)\right) = \delta_{bb'},
\end{aligned}
\end{equation*}
where the last two equalities follow from the principal-minor identity and the previous determinant evaluations. Using $\det(R_{\emptyset\emptyset}) = 1$ and $R_{ii} = 1$, one applies $U^{\top} U = I$ to the column pairs $(S, S') = (\emptyset, \emptyset)$ and $(S, S') = (\{i\}, \{j\})$ and obtains, for every $\bb{y}$,
\begin{equation}\label{eq:determinantal.branch.identities}
\sum_b w_b = 1, \qquad \sum_b w_b\bb{d}_b\bb{d}_b^{\top} = \diag(c_1,\ldots,c_n).
\end{equation}

\emph{Change of variables and Jensen.} Let $\gamma_R$ be the $\mathcal{N}(\bb{0},R)$ law. Since $\bb{y} = \bb{x} - R\bb{d}(\bb{x})$ and $\bb{x}^{\top}\bb{d}(\bb{x}) = \sum_i c_i$,
\[
\bb{x}^{\top} R^{-1}\bb{x} + \bb{d}(\bb{x})^{\top} R\bb{d}(\bb{x}) = \bb{y}^{\top} R^{-1}\bb{y} + 2\sum_i c_i.
\]
With $\bb{c} = (c_1,\ldots,c_n)^{\top} \in (0,\infty)^n$, changing variables on all branches and then applying Jensen's inequality and \eqref{eq:determinantal.branch.identities} gives
\begin{equation}\label{eq:inverse.square.kernel.bound}
\begin{aligned}
K_R(\bb{c})
&\coloneqq \EE_{\gamma_R}\left[\exp\left(-\frac{1}{2}\sum_i\frac{c_i^2}{X_i^2}\right)\right] \\
&= e^{-\sum_i c_i} \, \EE_{\bb{Y} \sim \mathcal{N}(\bb{0},R)}\left[\sum_b w_b(\bb{Y})\exp\left(-\frac{1}{2}\bb{d}_b(\bb{Y})^{\top}(I - R)\bb{d}_b(\bb{Y})\right)\right] \\
&\geq e^{-\sum_i c_i} \, \EE_{\bb{Y} \sim \mathcal{N}(\bb{0},R)}\left[\exp\left(-\frac{1}{2}\sum_b w_b(\bb{Y})\bb{d}_b(\bb{Y})^{\top}(I - R)\bb{d}_b(\bb{Y})\right)\right] \\
&= e^{-\sum_i c_i}\exp\left(-\frac{1}{2}\tr\left((I - R)\diag(c_1,\ldots,c_n)\right)\right) = e^{-\sum_i c_i},
\end{aligned}
\end{equation}
where the last equality follows from $R_{ii} = 1$. Equality holds when $R = I$. The same kernel bound follows for positive-semidefinite correlation matrices. Indeed, set $R_{\e} = (1 - \e)R + \e I$. The matrix $R_{\e}$ is a positive-definite correlation matrix for $0 < \e < 1$. For the fixed $\bb{c} \in (0,\infty)^n$, extend the function $\bb{x} \mapsto \exp(-\frac{1}{2}\sum_i (c_i^2/x_i^2))$ by zero to the coordinate hyperplanes. The resulting function is bounded and continuous on $\R^n$. Since $\mathcal{N}(\bb{0},R_{\e})$ converges weakly to $\mathcal{N}(\bb{0},R)$ as $\e \downarrow 0$, one has $K_{R_{\e}}(\bb{c}) \to K_R(\bb{c})$, and the bound follows.

\newpage
\emph{Recovery of moments.} For $\alpha > 0$ and $x \in \R$,
\[
|x|^{\alpha} = \frac{2^{1 - \alpha/2}}{\Gamma(\alpha/2)}\int_0^{\infty} t^{\alpha - 1}\exp\left(-\frac{t^2}{2x^2}\right)\,\rd t,
\]
where the exponential factor is understood to be zero when $x = 0$. Applying this identity coordinatewise, using Tonelli's theorem, evaluating the resulting gamma integrals, and applying Legendre's duplication formula, one obtains
\begin{equation}\label{eq:moment.recovery.comparison}
\begin{aligned}
\EE\bigg[\prod_i |X_i|^{\alpha_i}\bigg]
&= \prod_i \frac{2^{1 - \alpha_i/2}}{\Gamma(\alpha_i/2)}\int_{(0,\infty)^n} \bigg(\prod_i t_i^{\alpha_i - 1}\bigg) K_R(\bb{t})\,\rd \bb{t} \\
&\geq \prod_i \frac{2^{1 - \alpha_i/2}}{\Gamma(\alpha_i/2)}\int_{(0,\infty)^n} \bigg(\prod_i t_i^{\alpha_i - 1}\bigg) e^{-\sum_i t_i}\,\rd \bb{t} \\
&= \prod_i \frac{2^{1 - \alpha_i/2}\Gamma(\alpha_i)}{\Gamma(\alpha_i/2)}
= \prod_i \frac{2^{\alpha_i/2}\Gamma((\alpha_i + 1)/2)}{\sqrt{\pi}}
= \prod_i \EE\left[|X_i|^{\alpha_i}\right].
\end{aligned}
\end{equation}
This proves the inequality in \eqref{eq:thm:strong.GPI}.

It remains to determine when equality holds under the assumption that every marginal variance is positive. If $R = I$, the coordinates are independent and equality is immediate. Suppose first that $R$ is positive definite and $R \neq I$. If equality held in \eqref{eq:inverse.square.kernel.bound}, then equality in the strictly convex Jensen inequality would hold for $\gamma_R$-almost every $\bb{y}$. Since $w_b(\bb{y}) > 0$ for every branch and
\[
\sum_b w_b(\bb{y})\bb{d}_b(\bb{y})^{\top}(I - R)\bb{d}_b(\bb{y}) = \tr\left((I - R)\diag(c_1,\ldots,c_n)\right) = 0,
\]
this would imply $\bb{d}_b(\bb{y})^{\top}(I - R)\bb{d}_b(\bb{y}) = 0$ for every branch $b$ and $\gamma_R$-almost every $\bb{y}$. By continuity, the equality would hold for every $\bb{y}$. For each branch, the map $\bb{y} \mapsto \bb{d}_b(\bb{y})$ is a diffeomorphism from $\R^n$ onto the corresponding orthant. Hence, the quadratic form $\bb{d}^{\top}(I - R)\bb{d}$ would vanish throughout an orthant. A polynomial that vanishes on a nonempty open set vanishes identically, which would imply $R = I$, a contradiction. Thus, $K_R(\bb{c}) > \exp(-\sum_i c_i)$ for every $\bb{c} \in (0,\infty)^n$. Since the weight $\prod_i c_i^{\alpha_i - 1}$ is strictly positive on $(0,\infty)^n$, the inequality in \eqref{eq:moment.recovery.comparison} is strict.

Finally, suppose that $R$ is singular. For $\bb{c} \in [0,\infty)^n$, define $K_R(\bb{c})$ by the same expectation, with zero coordinates of $\bb{c}$ contributing zero to the exponent. Since every marginal is standard normal and hence nonzero almost surely, dominated convergence shows that $K_R$ is continuous on $[0,\infty)^n$. Choose $i \neq j$ with $R_{ij} \neq 0$, which is possible because $R$ has unit diagonal and is singular. At a point $\bb{c}$ with $c_i, c_j > 0$ and all other coordinates equal to zero, the kernel reduces to the corresponding bivariate kernel. The bivariate inequality is strict when $|R_{ij}| < 1$ by the preceding positive-definite argument. When $|R_{ij}| = 1$, $X_j = R_{ij} X_i$ almost surely, so, with $a = (c_i^2 + c_j^2)^{1/2} > 0$, the identity $\EE[\exp(-a^2/(2Z^2))] = e^{-a}$ for $Z \sim \mathcal{N}(0,1)$ \citep[Eq.~(3.471.15)]{GradshteynRyzhik2000} gives the kernel value $e^{-a} > \exp(-c_i - c_j)$. Since the function $\bb{c} \mapsto K_R(\bb{c}) - \exp\left(-\sum_i c_i\right)$ is continuous on $[0,\infty)^n$, the kernel inequality remains strict throughout a relative neighborhood of this point $\bb{c}$. The intersection of this relative neighborhood with $(0,\infty)^n$ is a nonempty open set. Given that the weight $\prod_i c_i^{\alpha_i - 1}$ is strictly positive on $(0,\infty)^n$, integrating over this open set shows that the inequality in \eqref{eq:moment.recovery.comparison} is again strict. Therefore, equality implies $R = I$, which, for a Gaussian vector, is equivalent to independence of the coordinates.
\end{proof}

\section{Connection with recent proofs of strong polarization inequalities}\label{sec:connection}

The proof of Theorem~\ref{thm:strong.GPI} contains, as a specialization of \eqref{eq:determinantal.branch.identities}, the averaging identities used by \citet{MartinezOrtegaMoreno2026} to prove the strong polarization inequality and by \citet{GalicerOrtegaMorenoPinasco2026} to prove its weighted extension. Let $\bb{v}_1,\ldots,\bb{v}_n$ be linearly independent unit vectors, let $R = (\langle \bb{v}_i,\bb{v}_j\rangle)_{i,j}$ be their Gram matrix, and let $p_1,\ldots,p_n > 0$ satisfy $\sum_i p_i = 1$. In \eqref{eq:branch.equation}, set the vector parameter $\bb{y} = \bb{0}$ and take $c_i = p_i$. In this section, write $\bb{d}_b = \bb{d}_b(\bb{0})$ and $w_b = w_b(\bb{0})$. Define $\bb{u}_b = \sum_i d_{b,i}\bb{v}_i$ and $a_{b,i} = \langle \bb{v}_i,\bb{u}_b\rangle$. Then \eqref{eq:branch.equation} gives
\[
a_{b,i} = (R\bb{d}_b)_i = \frac{p_i}{d_{b,i}}, \qquad \|\bb{u}_b\|^2 = \bb{d}_b^{\top}R\bb{d}_b = \sum_i p_i = 1, \qquad \bb{u}_b = \sum_i \frac{p_i\bb{v}_i}{a_{b,i}}.
\]
The last identity is the normalized Lagrange multiplier equation on the unit sphere for the weighted product function $f(\bb{u}) = \prod_{i=1}^n |\langle \bb{v}_i,\bb{u}\rangle|^{p_i}$. Let $\mathcal{E}(f)$ denote the set of stationary points of $f$ on the unit sphere at which $f$ is nonzero, that is, the set of unit vectors $\bb{u}$ for which $f(\bb{u}) \neq 0$ and the normalized gradient condition
\[
\sum_{i=1}^n \frac{p_i\bb{v}_i}{\langle \bb{v}_i,\bb{u}\rangle} = \bb{u}
\]
holds. By the unique solvability of \eqref{eq:branch.equation} in each orthant established in Section~\ref{sec:main.result}, the map $b \mapsto \bb{u}_b$ identifies the branches bijectively with $\mathcal{E}(f)$. Writing $\bb{a}_b = (a_{b,1},\ldots,a_{b,n})^{\top}$, one has $\bb{a}_b = \bb{x}_b$ in the notation of the proof. Since $d_{b,i}^2/p_i = p_i/a_{b,i}^2$, Sylvester's determinant identity $\det(I + AB) = \det(I + BA)$ rewrites \eqref{eq:branch.weight} as
\[
w_b = \det\!\left(I + R\diag(p_i/a_{b,i}^2)\right)^{-1} = \det\!\left(I + \sum_i \frac{p_i\bb{v}_i \otimes \bb{v}_i}{a_{b,i}^2}\right)^{-1},
\]
where the second determinant is taken on the span of $\bb{v}_1,\ldots,\bb{v}_n$ and $(\bb{v}_i \otimes \bb{v}_i)\bb{x} = \langle \bb{v}_i,\bb{x}\rangle\bb{v}_i$. In these variables, \eqref{eq:determinantal.branch.identities} becomes
\[
\sum_b w_b = 1, \qquad \sum_b w_b\frac{p_i p_j}{a_{b,i}a_{b,j}} = p_i\delta_{ij}, \qquad i,j \in \{1,\ldots,n\},
\]
where $\delta_{ij}$ is the Kronecker delta. Taking the trace and using $\sum_b w_b = 1$ yields
\begin{equation}\label{eq:weighted.polarization.cancellation}
\sum_b\left(1 - \sum_i \frac{p_i^2}{\langle \bb{v}_i,\bb{u}_b\rangle^2}\right)w_b = 0.
\end{equation}

For $p_i = 1/n$, let $P(\bb{u}) = \prod_i \langle \bb{v}_i,\bb{u}\rangle$. Since $f(\bb{u}) = |P(\bb{u})|^{1/n}$, the set $\mathcal{E}(f)$ is precisely the set $\mathcal{E}(P)$ of \emph{extremal points} in the notation of \citet{MartinezOrtegaMoreno2026}. Under this identification, the preceding determinant formula gives $w_b = \mu(\bb{u}_b)$, where
\[
\mu(\bb{u}) = \det\!\left(I + \frac{1}{n}\sum_i \frac{\bb{v}_i \otimes \bb{v}_i}{\langle \bb{v}_i,\bb{u}\rangle^2}\right)^{-1}
\]
is the weight in \citet[Theorem~A]{MartinezOrtegaMoreno2026}. Multiplying \eqref{eq:weighted.polarization.cancellation} by $-n^2$ and rewriting the branch sum over $\mathcal{E}(P)$ gives their key averaging property:
\[
\sum_{\bb{u} \in \mathcal{E}(P)}\left(\sum_i \frac{1}{\langle \bb{v}_i,\bb{u}\rangle^2} - n^2\right)\mu(\bb{u}) = 0.
\]
Since every weight is positive, there exists $\bb{u} \in \mathcal{E}(P)$ such that
\begin{equation}\label{eq:strong.pol.conj}
\sum_{i=1}^n \frac{1}{\langle \bb{v}_i,\bb{u}\rangle^2} \leq n^2.
\end{equation}
This is the basis case of the {\it strong polarization inequality} proved by \citet{MartinezOrtegaMoreno2026}. Their limiting argument extends the averaging identity, and hence the inequality, from a basis to arbitrary nonparallel unit vectors. By the AM--GM inequality, the reciprocal-square bound \eqref{eq:strong.pol.conj} implies
\[
\sup_{\|\bb{u}\| = 1}\prod_{i=1}^n |\langle \bb{v}_i,\bb{u}\rangle| \geq n^{-n/2},
\]
which is the geometric formulation of Corollary~\ref{cor:real.linear.polarization} in Section~\ref{sec:consequences}. Their paper also constructs many extremal configurations for the strong polarization inequality arising from finite Coxeter systems and characterizes all extremal configurations for the real linear polarization problem.

\citet{GalicerOrtegaMorenoPinasco2026} extend this mechanism from equal weights to arbitrary positive weights. For rational weights, they choose $k_1,\ldots,k_n \in \N$, set $s = \sum_i k_i$, and write $\alpha_i = k_i/s$; their $\alpha_i$ is the quantity denoted here by $p_i$. Their Gram matrix $G$ is $R$, their matrix $A = G^{-1}$ is $R^{-1}$, their vector $\bb{x}$ on the sphere is $\bb{u}_b$, their coordinate vector $\bb{y} = (\langle \bb{v}_i,\bb{x}\rangle)_i$ is $\bb{a}_b = \bb{x}_b$, and their dual coordinate vector $\bb{z} = A\bb{y}$ is $\bb{d}_b$. Thus, their vector $\bb{y}$ is not the vector parameter $\bb{y}$ in \eqref{eq:branch.equation}. To avoid this collision, denote their vector here by $\bb{y}_{\mathrm{G}}$. Their polynomial system and weight are
\[
h_i(\bb{y}_{\mathrm{G}}) = y_{\mathrm{G},i}(A\bb{y}_{\mathrm{G}})_i - \alpha_i, \qquad
\mu_{\mathrm{G}}(\bb{y}_{\mathrm{G}}) = \det\!\left(A + \diag(\alpha_i/y_{\mathrm{G},i}^2)\right)^{-1};
\]
see \cite[pp.~4,~6]{GalicerOrtegaMorenoPinasco2026}. The simultaneous solutions $\bb{y}_{\mathrm{G}} \in \mathbb{C}^n$ of $h_i(\bb{y}_{\mathrm{G}}) = 0$, $i \in \{1,\ldots,n\}$, are precisely the vectors $\bb{a}_b$, one for each branch $b$. Also,
\[
A\bb{a}_b = \bb{d}_b, \qquad \mu_{\mathrm{G}}(\bb{a}_b) = \det(R)w_b.
\]
The Euler--Jacobi cancellation in \citet[p.~6]{GalicerOrtegaMorenoPinasco2026}, written with $\bb{y}_{\mathrm{G}}$ in place of their $\bb{y}$, is
\[
\sum_{h(\bb{y}_{\mathrm{G}})=\bb{0}}\left(s^2 - \sum_i \frac{k_i^2}{y_{\mathrm{G},i}^2}\right)\mu_{\mathrm{G}}(\bb{y}_{\mathrm{G}}) = 0.
\]
Because $k_i = sp_i$, this is exactly \eqref{eq:weighted.polarization.cancellation} multiplied by $s^2\det(R)$. Since all weights are positive, this cancellation selects a unit vector in the rational basis case. Passing from rational to arbitrary real weights and from bases to arbitrary systems of unit vectors, they obtain their {\it weighted strong polarization inequality}: there exists a unit vector $\bb{u}$ such that
\begin{equation}\label{eq:weighted.strong.pol.conj}
\sum_{i=1}^n \frac{p_i^2}{\langle \bb{v}_i,\bb{u}\rangle^2} \leq 1.
\end{equation}
The weighted AM--GM inequality then gives
\begin{equation}\label{eq:prod.ineq.2}
\sup_{\|\bb{u}\| = 1}\prod_{i=1}^n |\langle \bb{v}_i,\bb{u}\rangle|^{p_i} \geq \prod_{i=1}^n p_i^{p_i/2},
\end{equation}
which is the normalized geometric form of Corollary~\ref{cor:weighted.products.linear.functionals} in Section~\ref{sec:consequences}.

The proof of Theorem~\ref{thm:strong.GPI} diverges after the specialization corresponding here to $\bb{y} = \bb{0}$. The proofs of the strong polarization inequality and its weighted extension apply the Euler--Jacobi theorem to a polynomial system describing the stationary points and use the resulting scalar cancellation to select one of them. Here, the scalar cancellation is instead the trace of the full matrix identity \eqref{eq:determinantal.branch.identities}, obtained from the principal-minor expansion and the orthogonality of the matrix $U$. That matrix identity includes off-diagonal cancellations, holds for every $\bb{y} \in \R^n$ and every $c_1,\ldots,c_n > 0$, and does not require rational weights. All branches are then used simultaneously in the change-of-variables and Jensen step \eqref{eq:inverse.square.kernel.bound}, which yields the inverse-square Gaussian kernel bound and, after integral recovery of the moments, the strong GPI for arbitrary positive exponents.

\section{Consequences of Theorem~\ref{thm:strong.GPI}}\label{sec:consequences}

The consequences collected in this section include the exact real linear polarization constant when $\dim(\mathcal{H}) \geq n$ (Corollary~\ref{cor:real.linear.polarization}), a sharp weighted product inequality for real linear functionals (Corollary~\ref{cor:weighted.products.linear.functionals}), spherical product-moment bounds (Corollary~\ref{cor:spherical.product.moments}), a family of unequal-multiplicity hafnian inequalities for positive-semidefinite matrices together with a complete characterization of their equality cases (Corollary~\ref{cor:hafnian.inequalities}), logarithmic variance and covariance inequalities (Corollary~\ref{cor:logarithmic.covariance}), and a R\'enyi total-correlation certificate (Corollary~\ref{cor:Renyi.total.correlation}). The theorem also removes the positive-side GPI hypothesis from the mixed-sign lower bounds of \citet{LanOuimetSun2025} (Corollary~\ref{cor:mixed.sign.lower}).

\bigskip
As observed by \citet{MR2385646} and later emphasized by \citet{MR3425898}, the even-exponent GPI in \eqref{eq:original.GPI} implies the conjectured value of the real linear polarization constant. The conclusion is stated in the functional-analytic form introduced by \citet{MR1636556} and used in \eqref{eq:RLPC.conjecture}, together with its equivalent geometric formulation. Once Theorem~\ref{thm:strong.GPI} is available, the lower bound follows from the Riesz representation theorem, Gaussian radial decomposition, the standard Gaussian and chi moment formulas, and Stirling's formula, while optimality follows from the AM--GM inequality and Bessel's inequality.

\begin{corollary}[Real linear polarization constant]\label{cor:real.linear.polarization}
Let $\mathcal{H}$ be a real Hilbert space and let $f_1,\ldots,f_n$ be continuous real linear functionals on $\mathcal{H}$. Define $\left\|\prod_{i=1}^n f_i\right\| = \sup_{\|\bb{x}\| \leq 1}\prod_{i=1}^n |f_i(\bb{x})|$. Then
\[
\left\|\prod_{i=1}^n f_i\right\| \geq n^{-n/2}\prod_{i=1}^n \|f_i\|.
\]
The constant $n^{-n/2}$ is optimal whenever $\dim(\mathcal{H}) \geq n$. Equivalently, in this dimension range,
\[
\inf_{\substack{\bb{u}_1,\ldots,\bb{u}_n \in \mathcal{H} \\ \|\bb{u}_1\| = \cdots = \|\bb{u}_n\| = 1}}\sup_{\|\bb{x}\| = 1}\prod_{i=1}^n |\langle \bb{x},\bb{u}_i\rangle| = n^{-n/2}.
\]
\end{corollary}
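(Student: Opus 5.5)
Reduce to unit vectors and apply Theorem~\ref{thm:strong.GPI} with even exponents $2m$ to a Gaussian vector built from the $f_i$, then let $m\to\infty$.

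\emph{Reduction.} If some $f_i=0$ the inequality is trivial. Otherwise, by the Riesz representation theorem write $f_i(\bb{x})=\langle \bb{x},\bb{v}_i\rangle$ with $\bb{v}_i\neq 0$, and divide by $\|\bb{v}_i\|$, so that $\|f_i\|=\|\bb{u}_i\|=1$. Restricting to $V=\operatorname{span}(\bb{u}_1,\ldots,\bb{u}_n)$, of dimension $k\le n$, does not change the supremum: for $\bb{x}$ in the unit ball, its orthogonal projection onto $V$ is still in the ball and has the same values $\langle\bb{x},\bb{u}_i\rangle$. Identify $V$ with $\R^k$ and define $M=\sup_{\|\bb{x}\|\le1}\prod_i|\langle\bb{x},\bb{u}_i\rangle|$.

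\emph{Gaussian step.} Let $\bb{G}\sim\mathcal{N}(\bb{0},I_k)$ and set $X_i=\langle\bb{G},\bb{u}_i\rangle$, so each $X_i\sim\mathcal{N}(0,1)$ and $\bb{X}$ is centered Gaussian. By homogeneity, $\prod_i|X_i|\le M\|\bb{G}\|^n$. Theorem~\ref{thm:strong.GPI} with $\alpha_i=2m$ then gives
\[
\bigl((2m-1)!!\bigr)^n\le \EE\Bigl[\prod_i X_i^{2m}\Bigr]\le M^{2m}\,\EE\bigl[\|\bb{G}\|^{2mn}\bigr]=M^{2m}\,2^{mn}\frac{\Gamma(k/2+mn)}{\Gamma(k/2)}.
\]
Take $2m$-th roots and let $m\to\infty$. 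Stirling's formula gives $((2m-1)!!)^{1/(2m)}\sim(2m/e)^{1/2}$, while $(2^{mn}\Gamma(k/2+mn))^{1/(2m)}\sim(2mn/e)^{n/2}$; the fixed factor $\Gamma(k/2)$ and the polynomial corrections in $m$ disappear in the limit. Comparing the two sides yields $1\le M\,n^{n/2}$, that is, $M\ge n^{-n/2}$.

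\emph{Optimality and the equivalent form.} When $\dim\mathcal{H}\ge n$, choose orthonormal $\bb{u}_1,\ldots,\bb{u}_n$. For $\|\bb{x}\|\le1$, the AM--GM inequality and Bessel's inequality give
\[
\prod_i|\langle\bb{x},\bb{u}_i\rangle|\le\Bigl(\tfrac1n\sum_i\langle\bb{x},\bb{u}_i\rangle^2\Bigr)^{n/2}\le n^{-n/2},
\]
with equality at $\bb{x}=n^{-1/2}\sum_i\bb{u}_i$. So the constant cannot be improved. For unit vectors, the supremum over the unit ball equals the supremum over the unit sphere by homogeneity (when $\mathcal{H}\ne\{0\}$, which holds here since $\dim\mathcal{H}\ge n$). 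Hence the infimum formula follows from the lower bound combined with this example.

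The main obstacle is the asymptotic comparison in the Gaussian step: the Stirling computation has to be done carefully so that exactly the factor $n^{n/2}$ survives. Everything else is routine.
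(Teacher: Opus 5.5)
Your proposal is correct and follows the paper's proof essentially step for step. Both arguments project onto the span of the unit vectors $\bb{u}_i$, derive the chain $M^{2m}\EE[\|\bb{G}\|^{2mn}] \geq \EE[\prod_i X_i^{2m}] \geq ((2m-1)!!)^n$ from Theorem~\ref{thm:strong.GPI} with $\alpha_i = 2m$, apply Stirling's formula as $m \to \infty$ (your asymptotics $(2m/e)^{1/2}$ and $(2mn/e)^{n/2}$ are right), and obtain optimality from the orthonormal example via AM--GM and Bessel's inequality.
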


\begin{proof}
The assertion is trivial if some $f_i = 0$. Otherwise, the Riesz representation theorem gives unit vectors $\bb{u}_i \in \mathcal{H}$ such that $f_i(\bb{x}) = \|f_i\|\langle \bb{x},\bb{u}_i\rangle$. Let $E$ be the span of $\bb{u}_1,\ldots,\bb{u}_n$, let $d = \dim(E)$, and let $\bb{G}$ be a standard Gaussian vector in $E$. Let
\[
M = \sup_{\substack{\bb{x} \in E \\ \|\bb{x}\| = 1}}\prod_{i=1}^n |\langle \bb{x},\bb{u}_i\rangle|.
\]
Orthogonal projection onto $E$ and homogeneity give
\[
\left\|\prod_{i=1}^n f_i\right\| = M\prod_{i=1}^n \|f_i\|.
\]
For every integer $m \geq 1$, Theorem~\ref{thm:strong.GPI} gives, with $Z \sim \mathcal{N}(0,1)$,
\[
M^{2m}\EE\left[\|\bb{G}\|^{2mn}\right] \geq \EE\left[\prod_{i=1}^n |\langle \bb{G},\bb{u}_i\rangle|^{2m}\right] \geq \left\{\EE\left[|Z|^{2m}\right]\right\}^n.
\]
Using the identities $\EE\left[|Z|^{2m}\right] = 2^m\Gamma(m + 1/2)/\sqrt{\pi}$ and $\EE\left[\|\bb{G}\|^{2mn}\right] = 2^{mn}\Gamma(mn + d/2)/\Gamma(d/2)$, applying Stirling's formula, and letting $m \to \infty$, one obtains $M \geq n^{-n/2}$, which proves the lower bound.

If $\dim(\mathcal{H}) \geq n$, choose $\bb{u}_1,\ldots,\bb{u}_n$ to be orthonormal and set $f_i(\bb{x}) = \langle \bb{x},\bb{u}_i\rangle$. For every unit vector $\bb{x}$, the AM--GM inequality and Bessel's inequality give
\[
\begin{aligned}
\prod_{i=1}^n |\langle \bb{x},\bb{u}_i\rangle|
&= \left(\prod_{i=1}^n |\langle \bb{x}, \bb{u}_i \rangle|^2\right)^{1/2} \\
&\leq \left(\frac{1}{n} \sum_{i=1}^n |\langle \bb{x}, \bb{u}_i \rangle|^2\right)^{n/2}
\leq \left(\frac{1}{n} \|\bb{x}\|^2\right)^{n/2}
= n^{-n/2},
\end{aligned}
\]
with equality at $\bb{x} = n^{-1/2}\sum_{i=1}^n \bb{u}_i$. This proves optimality. Since every $n$-tuple of unit vectors defines norm-one functionals in this way, the lower bound and the orthonormal example also give the equivalent infimum formula.
\end{proof}

\begin{remark}
As explained in Section~\ref{sec:connection}, the strong polarization inequality was proved recently by \citet{MartinezOrtegaMoreno2026} through an extremal-point averaging argument based on the Euler--Jacobi vanishing theorem. This reciprocal-square inequality implies the real linear polarization estimate \eqref{eq:RLPC.conjecture} by the AM--GM inequality, while their Theorem~B shows that the value $n^{-n/2}$ in the geometric formulation is attained only when $\bb{u}_1,\ldots,\bb{u}_n$ are orthonormal.
\end{remark}

Allowing the exponents in Theorem~\ref{thm:strong.GPI} to grow at different rates gives the following sharp weighted extension of Corollary~\ref{cor:real.linear.polarization}. After both sides of the inequality are raised to the power $1/V$, the resulting constant depends only on the normalized weights and is the natural multiplicative counterpart of the weighted AM--GM inequality.

\begin{corollary}[Weighted products of linear forms]\label{cor:weighted.products.linear.functionals}
Let $\mathcal{H}$ be a real Hilbert space, let $f_1,\ldots,f_n$ be continuous real linear functionals on $\mathcal{H}$, and let $\nu_1,\ldots,\nu_n > 0$. Let $V = \sum_{i=1}^n \nu_i$. Then
\[
\sup_{\|\bb{x}\| \leq 1}\prod_{i=1}^n |f_i(\bb{x})|^{\nu_i} \geq \prod_{i=1}^n\left(\frac{\nu_i}{V}\right)^{\nu_i/2}\prod_{i=1}^n \|f_i\|^{\nu_i}.
\]
The constant $\prod_i(\nu_i/V)^{\nu_i/2}$ is optimal whenever $\dim(\mathcal{H}) \geq n$.
\end{corollary}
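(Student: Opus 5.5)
Following the template of the proof of Corollary~\ref{cor:real.linear.polarization}, I would first reduce to unit vectors and then combine Theorem~\ref{thm:strong.GPI} with a radial moment bound as the exponents grow.

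\emph{Reduction.} If some $f_i = 0$ there is nothing to prove. Otherwise, use the Riesz representation theorem to write $f_i(\bb{x}) = \|f_i\|\langle \bb{x},\bb{u}_i\rangle$ with unit vectors $\bb{u}_i$. Let $E$ be their span, of dimension $d$, and set
\[
M = \sup_{\bb{x}\in E,\ \|\bb{x}\|=1}\prod_i |\langle \bb{x},\bb{u}_i\rangle|^{\nu_i}.
\]
Orthogonal projection onto $E$ and homogeneity (the product is increasing in $\|\bb{x}\|$) show that the left side of the corollary equals $M\prod_i\|f_i\|^{\nu_i}$. It therefore suffices to prove $M \geq \prod_i(\nu_i/V)^{\nu_i/2}$.

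\emph{Lower bound.} Let $\bb{G}$ be a standard Gaussian vector in $E$, and let $m>0$ be a real parameter tending to infinity. Pointwise, $\prod_i|\langle \bb{G},\bb{u}_i\rangle|^{2m\nu_i} \leq M^{2m}\|\bb{G}\|^{2mV}$. Apply Theorem~\ref{thm:strong.GPI} with exponents $\alpha_i = 2m\nu_i > 0$ to the Gaussian vector $(\langle \bb{G},\bb{u}_i\rangle)_i$, whose marginals are standard normal. This gives
\[
M^{2m}\,\frac{2^{mV}\Gamma(mV+d/2)}{\Gamma(d/2)} \geq \prod_i \frac{2^{m\nu_i}\Gamma(m\nu_i+1/2)}{\sqrt{\pi}}.
\]
The powers of $2$ cancel because $\sum_i\nu_i = V$. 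Now take $2m$th roots and let $m\to\infty$. By Stirling's formula, $\Gamma(mV+d/2)^{1/(2m)} \sim (mV/e)^{V/2}$ and $\Gamma(m\nu_i+1/2)^{1/(2m)} \sim (m\nu_i/e)^{\nu_i/2}$, while constant factors raised to $1/(2m)$ tend to $1$. The factors $(m/e)^{V/2}$ cancel on both sides, leaving $M \geq \prod_i \nu_i^{\nu_i/2}/V^{V/2} = \prod_i(\nu_i/V)^{\nu_i/2}$.

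The main technical step is this Stirling bookkeeping: the ratio of Gamma functions has to be controlled uniformly to first exponential order, i.e.\ one uses $\log\Gamma(z+a) = z\log z - z + O(\log z)$. This is routine, since the theorem allows arbitrary real positive exponents and so no rationality or integrality of the $\nu_i$ is needed.

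\emph{Optimality.} When $\dim(\mathcal{H})\geq n$, take $\bb{u}_i$ orthonormal and $f_i = \langle\cdot,\bb{u}_i\rangle$. For $\|\bb{x}\|\le1$, the weighted AM--GM inequality with weights $\nu_i/V$ applied to $t_i = \langle \bb{x},\bb{u}_i\rangle^2 V/\nu_i$, followed by Bessel's inequality, gives
\[
\prod_i |\langle \bb{x},\bb{u}_i\rangle|^{\nu_i} = \prod_i(\nu_i/V)^{\nu_i/2}\Bigl(\prod_i t_i^{\nu_i/V}\Bigr)^{V/2} \leq \prod_i(\nu_i/V)^{\nu_i/2}\Bigl(\sum_i \langle \bb{x},\bb{u}_i\rangle^2\Bigr)^{V/2} \leq \prod_i(\nu_i/V)^{\nu_i/2}.
\]
Equality is attained at $\bb{x} = \sum_i(\nu_i/V)^{1/2}\bb{u}_i$, which is a unit vector. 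Hence the constant cannot be improved.
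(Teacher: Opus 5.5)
Your proposal is correct and matches the paper's proof step for step. It uses the Riesz reduction to unit vectors in the span $E$, then Theorem~\ref{thm:strong.GPI} with exponents $2m\nu_i$ compared against the chi moment $\EE[\|\bb{G}\|^{2mV}]$, then Stirling's formula as $m\to\infty$, and finally optimality via orthonormal vectors, weighted AM--GM and Bessel's inequality with equality at $\sum_i\sqrt{\nu_i/V}\,\bb{u}_i$. The one difference is that you let $m$ range over positive reals instead of integers, which is harmless because the theorem allows arbitrary positive exponents.
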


\begin{proof}
Proceeding as in the proof of Corollary~\ref{cor:real.linear.polarization}, with $M = \sup_{\bb{x} \in E, \|\bb{x}\| = 1} \prod_{i=1}^n |\langle \bb{x},\bb{u}_i\rangle|^{\nu_i}$ and $S = \|\bb{G}\|$, one obtains from Theorem~\ref{thm:strong.GPI} that, for every integer $m \geq 1$,
\[
M^{2m}\EE[S^{2mV}] \geq \EE\left[\prod_{i=1}^n |\langle \bb{G},\bb{u}_i\rangle|^{2m\nu_i}\right] \geq \prod_{i=1}^n \EE[|Z|^{2m\nu_i}], \qquad Z \sim \mathcal{N}(0,1).
\]
Since
\[
\EE[|Z|^{2m\nu_i}] = \frac{2^{m\nu_i}\Gamma(m\nu_i + 1/2)}{\sqrt{\pi}}, \qquad \EE[S^{2mV}] = \frac{2^{mV}\Gamma(mV + \dim(E)/2)}{\Gamma(\dim(E)/2)},
\]
Stirling's formula yields
\[
\lim_{m \to \infty}\left\{\frac{\prod_{i=1}^n \EE[|Z|^{2m\nu_i}]}{\EE[S^{2mV}]}\right\}^{1/(2m)} = \prod_{i=1}^n\left(\frac{\nu_i}{V}\right)^{\nu_i/2},
\]
which proves the lower bound. If $\bb{u}_1,\ldots,\bb{u}_n$ are orthonormal and $p_i = \nu_i/V$, for every unit vector $\bb{x}$, the weighted AM--GM inequality and Bessel's inequality give
\[
\begin{aligned}
\prod_{i=1}^n |\langle \bb{x},\bb{u}_i\rangle|^{\nu_i}
&\leq \left(\prod_{i=1}^n p_i^{\nu_i}\right)^{1/2} \left(\sum_{i=1}^n |\langle \bb{x},\bb{u}_i\rangle|^2\right)^{V/2} \\
&\leq \left(\prod_{i=1}^n p_i^{\nu_i}\right)^{1/2} (\|\bb{x}\|^2)^{V/2} \\
&= \prod_{i=1}^n p_i^{\nu_i/2},
\end{aligned}
\]
with equality at $\bb{x} = \sum_{i=1}^n \sqrt{p_i}\bb{u}_i$. This proves optimality.
\end{proof}

\begin{remark}
The weighted inequality in Corollary~\ref{cor:weighted.products.linear.functionals} was proved recently, in an equivalent normalized form, by \citet[Theorem~2]{GalicerOrtegaMorenoPinasco2026}, who established the optimality of its constant in Section~2.3. Their proof of the lower bound uses their stronger weighted strong polarization inequality (Theorem~1), whereas the proof above derives the lower bound from Theorem~\ref{thm:strong.GPI} by allowing the exponents to grow at different rates and applying Gaussian radial decomposition and Stirling's formula. In both proofs, optimality follows from an orthonormal example. As \citet[pp.~2--3]{GalicerOrtegaMorenoPinasco2026} also observed, the sharp factor has a Shannon-entropy interpretation. Indeed, with $\bb{p} = (p_1,\ldots,p_n)$, where $p_i = \nu_i/V$, and $H(\bb{p}) = -\sum_i p_i\log(p_i)$,
\[
\prod_{i=1}^n \left(\frac{\nu_i}{V}\right)^{\nu_i/2} = \prod_{i=1}^n p_i^{V p_i/2} = \exp\left(\frac{V}{2}\sum_{i=1}^n p_i\log(p_i)\right) = \exp\left(-\frac{V}{2}H(\bb{p})\right).
\]
Thus, when $\nu_1 = \cdots = \nu_n = 1$, the constant $n^{-n/2}$ corresponds to the maximal entropy value $H(\bb{p}) = \log(n)$.
\end{remark}

Gaussian moments and spherical averages are linked by radial decomposition, a connection already exploited by \citet{MR2385646} for products of squared linear forms. Combining this decomposition with Theorem~\ref{thm:strong.GPI} gives a benchmark for unequal powers that is sharp when $d \geq n$: orthogonal directions then minimize the spherical product moment. This turns the probabilistic inequality into a geometric integral inequality on every Euclidean sphere.

\begin{corollary}[Spherical product moments]\label{cor:spherical.product.moments}
Let $E$ be a $d$-dimensional real Hilbert space, let $\bb{\Theta}$ be uniformly distributed on its unit sphere, let $\bb{u}_1,\ldots,\bb{u}_n \in E$ be unit vectors, and let $\alpha_1,\ldots,\alpha_n > 0$. Let $A = \sum_{i=1}^n \alpha_i$. Then
\[
\EE\left[\prod_{i=1}^n |\langle \bb{\Theta},\bb{u}_i\rangle|^{\alpha_i}\right] \geq \frac{\Gamma(d/2)}{\Gamma((d + A)/2)}\prod_{i=1}^n\frac{\Gamma((\alpha_i + 1)/2)}{\sqrt{\pi}}.
\]
Equality holds if and only if $\bb{u}_1,\ldots,\bb{u}_n$ are pairwise orthogonal. In particular, if $d \geq n$, the constant is optimal.
\end{corollary}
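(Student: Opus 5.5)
We reduce the spherical statement to Theorem~\ref{thm:strong.GPI} by Gaussian radial decomposition. Let $\bb{G}$ be a standard Gaussian vector in $E$. Write $\bb{G} = S\bb{\Theta}$, where $S = \|\bb{G}\|$ and $\bb{\Theta} = \bb{G}/\|\bb{G}\|$. Then $S$ and $\bb{\Theta}$ are independent, $\bb{\Theta}$ is uniform on the sphere, and $S^2 \sim \chi^2_d$. By homogeneity,
\[
\EE\left[\prod_{i=1}^n |\langle \bb{G},\bb{u}_i\rangle|^{\alpha_i}\right] = \EE[S^{A}]\,\EE\left[\prod_{i=1}^n |\langle \bb{\Theta},\bb{u}_i\rangle|^{\alpha_i}\right], \qquad \EE[S^A] = \frac{2^{A/2}\Gamma((d+A)/2)}{\Gamma(d/2)}.
\]
The vector $X_i = \langle \bb{G},\bb{u}_i\rangle$ is a centered Gaussian vector with standard normal marginals. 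Theorem~\ref{thm:strong.GPI} bounds the left side below by $\prod_i 2^{\alpha_i/2}\Gamma((\alpha_i+1)/2)/\sqrt{\pi}$. Dividing by $\EE[S^A]$, the powers of $2$ cancel because $\sum_i\alpha_i = A$, and this gives exactly the claimed bound.

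For the equality case, observe that the only inequality used is the one from Theorem~\ref{thm:strong.GPI}, applied with all variances equal to $1 > 0$. Hence equality in the corollary holds if and only if equality holds in \eqref{eq:thm:strong.GPI} for $(X_1,\ldots,X_n)$. By the theorem's equality characterization, this happens if and only if $X_1,\ldots,X_n$ are independent. For a Gaussian vector, independence is equivalent to the covariance matrix being diagonal. Here $\Cov(X_i,X_j) = \langle \bb{u}_i,\bb{u}_j\rangle$, so this means the $\bb{u}_i$ are pairwise orthogonal.

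Optimality when $d \geq n$ follows at once: an orthonormal family $\bb{u}_1,\ldots,\bb{u}_n$ exists in $E$, and by the equality case it attains the bound. If $d < n$, pairwise orthogonal unit vectors cannot exist, so the inequality is then strict for every configuration; this is consistent with the "if and only if" statement.

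The main point needing care is the independence of $S$ and $\bb{\Theta}$ together with the finiteness of all moments, which justifies the factorization. This is standard from the rotational invariance of $\bb{G}$ and the fact that all moments of $S$ are finite. Beyond that, the argument is bookkeeping of the Gamma constants. No obstacle remains, because the equality characterization is inherited verbatim from Theorem~\ref{thm:strong.GPI}.
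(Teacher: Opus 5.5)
Your proposal is correct and follows essentially the same route as the paper: the radial decomposition $\bb{G} = S\bb{\Theta}$ with $S$ independent of $\bb{\Theta}$, Theorem~\ref{thm:strong.GPI} applied to the unit-variance forms $\langle \bb{G},\bb{u}_i\rangle$, division by the chi moment $\EE[S^A]$, and the equality case inherited from the theorem's independence characterization, which here means orthogonality because $\Cov(X_i,X_j) = \langle \bb{u}_i,\bb{u}_j\rangle$. Your extra remark that the inequality is strict for every configuration when $d < n$ is a correct consequence of that same characterization.
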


\begin{proof}
Let $\bb{G}$ be a standard Gaussian vector in $E$ and write $\bb{G} = S\bb{\Theta}$, where $S = \|\bb{G}\|$ is independent of $\bb{\Theta}$. Theorem~\ref{thm:strong.GPI} and the standard Gaussian and chi moment formulas give
\[
\EE[S^A]\EE\left[\prod_{i=1}^n |\langle \bb{\Theta},\bb{u}_i\rangle|^{\alpha_i}\right] = \EE\left[\prod_{i=1}^n |\langle \bb{G},\bb{u}_i\rangle|^{\alpha_i}\right] \geq \prod_{i=1}^n \frac{2^{\alpha_i/2}\Gamma((\alpha_i + 1)/2)}{\sqrt{\pi}},
\]
where $\EE[S^A] = 2^{A/2}\Gamma((d + A)/2)/\Gamma(d/2)$. This proves the inequality. By the equality statement in Theorem~\ref{thm:strong.GPI}, equality holds if and only if the Gaussian linear forms are independent, which is equivalent to the orthogonality of $\bb{u}_1,\ldots,\bb{u}_n$. Such a configuration exists when $d \geq n$.
\end{proof}

\begin{remark}
For $\alpha_1 = \cdots = \alpha_n = 2$, Corollary~\ref{cor:spherical.product.moments} recovers the spherical moment inequality of \citet[Theorem~2.2]{MR2385646}.
\end{remark}

Wick's formula identifies even Gaussian product moments with hafnians of repeated covariance matrices; see \citet[][Eq.~(4)]{MR2385646}. Frenkel's hafnian conjecture is an algebraic reformulation of the equal-even-exponent GPI. Theorem~\ref{thm:strong.GPI} proves Frenkel's hafnian conjecture and yields the following extension to unequal multiplicities.

\begin{corollary}[Hafnian inequalities]\label{cor:hafnian.inequalities}
Let $\Sigma = (\sigma_{ij})_{1 \leq i,j \leq n}$ be a positive-semidefinite matrix, let $\bb{m} = (m_1,\ldots,m_n)$, where $m_1,\ldots,m_n \in \{1,2,\ldots\}$, and let $M = \sum_{i=1}^n m_i$. Let $\Sigma[2\bb{m}]$ denote the $2M \times 2M$ matrix obtained by repeating each index $i$ exactly $2m_i$ times as both a row index and a column index. Then
\begin{equation}\label{eq:hafnian.ineq}
\haf\left(\Sigma[2\bb{m}]\right) \geq \prod_{i=1}^n (2m_i - 1)!!\,\sigma_{ii}^{m_i}.
\end{equation}
Equality holds if and only if $\Sigma$ has a zero row or is diagonal.
\end{corollary}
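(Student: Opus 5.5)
Since $\Sigma$ is positive semidefinite, there is a centered Gaussian vector $\bb{X} = (X_1,\ldots,X_n)$ with covariance matrix $\Sigma$. The plan is to express both sides of \eqref{eq:hafnian.ineq} as Gaussian moments and then apply Theorem~\ref{thm:strong.GPI} with exponents $\alpha_i = 2m_i$.

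For the left side, Wick's (Isserlis') formula applied to the product of $2M$ jointly Gaussian factors, in which $X_i$ appears $2m_i$ times, gives
\[
\EE\left[\prod_{i=1}^n X_i^{2m_i}\right] = \sum_{\text{perfect matchings of } [2M]}\ \prod_{\text{pairs } \{k,l\}} \Sigma[2\bb{m}]_{kl} = \haf\left(\Sigma[2\bb{m}]\right).
\]
For the right side, $\EE[X_i^{2m_i}] = (2m_i-1)!!\,\sigma_{ii}^{m_i}$. Since the exponents are even, $|X_i|^{2m_i} = X_i^{2m_i}$, so Theorem~\ref{thm:strong.GPI} with $\alpha_i = 2m_i > 0$ yields \eqref{eq:hafnian.ineq} immediately.

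For the equality characterization, I would split into two cases.
\begin{itemize}
\item If $\Sigma$ has a zero row $i$, then $\sigma_{ii}=0$, so the right side vanishes. Every perfect matching pairs some copy of index $i$ with another index, which contributes a zero entry, so the hafnian also vanishes and equality holds.
\item If $\Sigma$ is diagonal, the coordinates of $\bb{X}$ are independent, so the moment factorizes and equality holds. Combinatorially, only matchings that pair copies of the same index survive, and there are $\prod_i (2m_i-1)!!$ of them.
\end{itemize}

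Conversely, suppose equality holds. If every $\sigma_{ii} > 0$, the equality clause of Theorem~\ref{thm:strong.GPI} forces $X_1,\ldots,X_n$ to be independent, so $\Sigma$ is diagonal. If some $\sigma_{ii} = 0$, then positive semidefiniteness gives $|\sigma_{ij}| \le \sqrt{\sigma_{ii}\sigma_{jj}} = 0$ for all $j$ (every $2\times 2$ principal minor is nonnegative), so row $i$ is zero. This case is where one has to be careful, because the theorem's equality clause assumes positive variances. Still, the case analysis closes: a zero variance forces a zero row, and then equality holds automatically.

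The main obstacle is therefore only bookkeeping. One must verify that the combinatorial hafnian of the repeated matrix matches Wick's expansion exactly, with each copy of index $i$ treated as a distinct position. One must also handle the degenerate-variance case separately, as above.
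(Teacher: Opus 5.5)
Your proposal is correct and follows essentially the same route as the paper. Wick's formula turns both sides into Gaussian moments, Theorem~\ref{thm:strong.GPI} with $\alpha_i = 2m_i$ gives the inequality, and the equality case splits on whether some $\sigma_{ii} = 0$ (positive semidefiniteness then forces a zero row, and both sides vanish) or all $\sigma_{ii} > 0$ (the theorem's equality clause gives independence, i.e., $\Sigma$ diagonal). One cosmetic point: in the zero-row case a copy of $i$ may be matched to another copy of $i$. The entry is then $\sigma_{ii} = 0$, which still lies in row $i$, so every matching still has zero weight; alternatively, $X_i = 0$ almost surely gives this at once.
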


\begin{proof}
Let $\bb{X} \sim \mathcal{N}(\bb{0},\Sigma)$. Wick's formula and the one-dimensional Gaussian moment formula give
\[
\EE\left[\prod_{i=1}^n X_i^{2m_i}\right] = \haf\left(\Sigma[2\bb{m}]\right), \qquad \EE\left[X_i^{2m_i}\right] = (2m_i - 1)!!\,\sigma_{ii}^{m_i}.
\]
Apply Theorem~\ref{thm:strong.GPI} with $\alpha_i = 2m_i$. If $\sigma_{ii} = 0$ for some $i$, positive semidefiniteness forces the $i$th row to vanish, and both sides are zero. If every diagonal entry is positive, the equality statement in Theorem~\ref{thm:strong.GPI} shows that equality holds if and only if the Gaussian coordinates are independent, which is equivalent to $\Sigma$ being diagonal.
\end{proof}

\begin{remark}
View the $2m_i$ copies of index $i$ as vertices, assign weight $\sigma_{ij}$ to an edge joining a copy of $i$ to a copy of $j$, and define the weight of a matching as the product of its edge weights. Then $\haf(\Sigma[2\bb{m}])$ is the sum of the weights of all perfect matchings, whereas the right-hand side of \eqref{eq:hafnian.ineq} is the contribution from those matchings that pair copies only within the same index group.
\end{remark}

Logarithms of the absolute values of Gaussian variables arise, for example, in the linearized multivariate stochastic-variance model of \citet{HarveyRuizShephard1994} and in the wavelet-based regression estimator of the Hurst parameter proposed by \citet[Sections~2.1--2.2]{ParkPark2009}. The absolute-moment function $\alpha \mapsto \EE[|X|^{\alpha}]$ is the moment-generating function of $\log|X|$. Because Theorem~\ref{thm:strong.GPI} holds for every positive real exponent, the corresponding moment inequalities may be differentiated with respect to the exponent parameters at boundary points where one or more of them vanish, yielding the following sharp variance and covariance inequalities.

\begin{corollary}[Logarithmic variance and covariance inequalities]\label{cor:logarithmic.covariance}
Let $\bb{X} = (X_1,\ldots,X_n)$ be a centered Gaussian vector with $\Var(X_i) > 0$ for every $i$. Then, for every $a_1,\ldots,a_n \geq 0$,
\begin{equation}\label{eq:var.log.ineq}
\Var\left(\sum_{i=1}^n a_i\log|X_i|\right) \geq \frac{\pi^2}{8}\sum_{i=1}^n a_i^2.
\end{equation}
In particular, for every $i \neq j$,
\[
\Cov(\log|X_i|,\log|X_j|) \geq 0, \qquad \Cov(|X_i|^{\alpha},\log|X_j|) \geq 0, \qquad \alpha > 0.
\]
\end{corollary}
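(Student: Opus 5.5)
The plan is to differentiate the strong GPI of Theorem~\ref{thm:strong.GPI} twice at the origin in the exponent parameters. By scaling we may assume every $\Var(X_i) = 1$; a scale change only shifts each $\log|X_i|$ by a constant and leaves variances and covariances unchanged. Fix $a_1,\ldots,a_n \geq 0$ and, for $t > 0$, set $\alpha_i = t a_i$, dropping indices with $a_i = 0$ since they contribute nothing to either side. Define
\[
\phi(t) = \log \EE\left[\prod_i |X_i|^{t a_i}\right] - \sum_i \log \EE\left[|X_i|^{t a_i}\right].
\]
Theorem~\ref{thm:strong.GPI} gives $\phi(t) \geq 0$ for $t > 0$, and clearly $\phi(0) = 0$. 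The first factor is the moment-generating function of $L = \sum_i a_i \log|X_i|$. Each summand is the moment-generating function of $\log|X_i|$ with $X_i \sim \mathcal{N}(0,1)$, namely $\psi(s) = \log\{2^{s/2}\Gamma((s+1)/2)/\sqrt{\pi}\}$.

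First I will check smoothness near $t = 0$. I need $\EE[e^{sL}] < \infty$ for $s$ in a neighborhood of $0$, including small negative $s$. Positive $s$ is fine. For negative $s$, Hölder's inequality reduces the question to $\EE[|X_i|^{-\beta}] < \infty$ for small $\beta > 0$, which holds for $\beta < 1$. Hence $t \mapsto \log \EE[e^{tL}]$ is real-analytic near $0$, with first derivative $\EE L$ and second derivative $\Var(L)$ at $0$. Likewise $\psi'(0) = \EE\log|Z|$ and $\psi''(0) = \Var(\log|Z|) = \tfrac14\psi_{\mathrm{trigamma}}(1/2) = \tfrac14\cdot\tfrac{\pi^2}{2} = \pi^2/8$. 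It follows that $\phi'(0) = \EE L - \sum_i a_i \EE\log|X_i| = 0$ and
\[
\phi''(0) = \Var(L) - \frac{\pi^2}{8}\sum_i a_i^2.
\]
Since $\phi \geq 0$ on $(0,\epsilon)$ with $\phi(0) = \phi'(0) = 0$, Taylor's theorem forces $\phi''(0) \geq 0$, which is exactly \eqref{eq:var.log.ineq}. Note that $\Var(\log|X_i|) = \pi^2/8$ as well.

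For the covariance statements, take $a = e_i + e_j$. The variance bound then reads $\pi^2/8 + \pi^2/8 + 2\Cov \geq \pi^2/4$, so $\Cov(\log|X_i|,\log|X_j|) \geq 0$. For the mixed covariance, fix $\alpha > 0$ and let $g(s) = \EE[|X_i|^\alpha |X_j|^s] - \EE|X_i|^\alpha\,\EE|X_j|^s$. Theorem~\ref{thm:strong.GPI} gives $g(s) \geq 0$ for $s > 0$, and $g(0) = 0$. Hence $g'(0^+) \geq 0$, and $g'(0) = \Cov(|X_i|^\alpha,\log|X_j|)$.

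The main obstacle is justifying differentiation under the expectation at the boundary point $s = 0$, where $|X_j|^s \log|X_j|$ must be dominated near $X_j = 0$. I will handle this by dominated convergence: for $|s| < 1/4$,
\[
\bigl||x|^s \log|x|\bigr| \leq C\bigl(|x|^{-1/2} + |x|^{1/2}\bigr),
\]
and $\EE\bigl[|X_i|^\alpha |X_j|^{-1/2}\bigr] < \infty$ by Hölder's inequality with a small negative power. The same domination yields the analyticity used above.
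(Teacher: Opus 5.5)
Your proposal is correct and follows essentially the same route as the paper: you normalize the marginals, show that $\log\EE[e^{tL}]-\sum_i\log\EE[|X_i|^{ta_i}]$ is nonnegative for $t>0$ by Theorem~\ref{thm:strong.GPI} and vanishes to first order at $t=0$, read off $\Var(L)\ge \frac{\pi^2}{8}\sum_i a_i^2$ from the second derivative, specialize to $a=e_i+e_j$, and obtain the mixed covariance as the right derivative at $0$ of the bivariate GPI excess. Your explicit domination bound $\bigl||x|^s\log|x|\bigr|\le C(|x|^{-1/2}+|x|^{1/2})$ for $|s|<1/4$ works out in full the differentiation-under-the-expectation step that the paper only sketches via H\"older's inequality.
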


\begin{proof}
Rescaling the coordinates only adds constants to their logarithms and multiplies $|X_i|^{\alpha}$ by a positive constant, so it is enough to assume that every marginal is standard normal. Let $L = \sum_i a_i\log|X_i|$ and
\[
F(t) = \log\EE[e^{tL}] - \sum_{i=1}^n\log\EE[|X_i|^{ta_i}], \qquad t \geq 0.
\]
Restricting to the indices for which $a_i > 0$, one obtains from Theorem~\ref{thm:strong.GPI} that $F(t) \geq 0$ for $t > 0$. By H\"older's inequality, $L$ and each $\log|X_i|$ have finite exponential moments in a neighborhood of zero, so $F$ is twice differentiable at zero. Direct differentiation gives $F(0) = F'(0) = 0$, and therefore $F''(0) \geq 0$. For $Z \sim \mathcal{N}(0,1)$,
\[
\begin{aligned}
\Var(\log|Z|)
&= \frac{1}{4}\frac{\rd^2}{\rd s^2}\log\EE[|Z|^{2s}]\bigg|_{s = 0} \\
&= \frac{1}{4}\frac{\rd^2}{\rd s^2}\log\left\{\frac{2^s\Gamma(s + 1/2)}{\sqrt{\pi}}\right\}\bigg|_{s = 0}
= \frac{1}{4}\psi_1(1/2) = \frac{\pi^2}{8},
\end{aligned}
\]
where $\psi_1$ denotes the trigamma function and $\psi_1(1/2) = \pi^2/2$. Differentiating the definition of $F$ twice at zero and using the standard normality of the marginals, one obtains
\[
0 \leq F''(0) = \Var(L) - \sum_{i=1}^n a_i^2 \, \Var(\log|X_i|) = \Var(L) - \frac{\pi^2}{8}\sum_{i=1}^n a_i^2,
\]
which proves \eqref{eq:var.log.ineq}. Taking $a_i = a_j = 1$ and all other weights equal to zero gives the logarithmic covariance inequality. Finally, Theorem~\ref{thm:strong.GPI} applied to $(X_i,X_j)$ gives
\[
\EE[|X_i|^{\alpha}|X_j|^t] - \EE[|X_i|^{\alpha}]\EE[|X_j|^t] \geq 0, \qquad t > 0.
\]
The expression vanishes at $t = 0$, and its right derivative there is $\Cov(|X_i|^{\alpha},\log|X_j|)$, which is therefore nonnegative.
\end{proof}

\begin{remark}
For a standard bivariate Gaussian pair with correlation $\rho$ satisfying $|\rho| < 1$, differentiating the absolute-moment formula in \citet{MR0045347} gives the exact identity
\[
\Cov(\log|X_i|,\log|X_j|) = \frac{1}{2}\arcsin^2(\rho).
\]
For $|\rho| = 1$, the identity follows directly from $X_j = \pm X_i$ a.s. It follows that the logarithmic covariance inequality above is sharp, with equality if and only if the pair is independent.
\end{remark}

Let $P$ be the joint law of a random vector and let $Q$ be the product of its marginals. The total correlation introduced by \citet{MR0109755} is the Kullback--Leibler divergence $D_{\mathrm{KL}}(P\Vert Q)$ and measures departure from mutual independence. A natural order-$\gamma$ R\'enyi analogue of total correlation is obtained by replacing $D_{\mathrm{KL}}(P\Vert Q)$ with $D_{\gamma}(P\Vert Q)$ while keeping $P$ and $Q$ fixed; for background on R\'enyi divergence, see \citet{MR3225930}. At order $\gamma = 2$, when $P \ll Q$, this divergence is given by
\[
D_2(P\Vert Q) = \log\int (\rd P/\rd Q)^2\,\rd Q,
\]
and it is defined to be $+\infty$ otherwise. It is nonnegative and vanishes exactly when $P = Q$, equivalently, when the coordinates are mutually independent. The next result converts the excess in Theorem~\ref{thm:strong.GPI} into an explicit lower bound on $D_2(P\Vert Q)$. The bound uses a product moment as a test function and therefore requires no direct evaluation of the joint density ratio.

\begin{corollary}[R\'enyi total-correlation certificate]\label{cor:Renyi.total.correlation}
Let $R$ be a positive-definite correlation matrix, let $\bb{X} \sim \mathcal{N}(\bb{0},R)$, let $P$ be the law of $(|X_1|,\ldots,|X_n|)$, and let $Q$ be the product of its marginals. For $\bb{\alpha} = (\alpha_1,\ldots,\alpha_n) \in (0,\infty)^n$, let
\[
M_R(\bb{\alpha}) = \frac{\EE\left[\prod_{i=1}^n |X_i|^{\alpha_i}\right]}{\prod_{i=1}^n \EE[|Z|^{\alpha_i}]}, \qquad Z \sim \mathcal{N}(0,1).
\]
Then $M_R(\bb{\alpha}) \geq 1$ and
\[
D_2(P\Vert Q) \geq \log\left[1 + \frac{(M_R(\bb{\alpha}) - 1)^2}{\prod_{i=1}^n \left\{\EE[|Z|^{2\alpha_i}] / \EE[|Z|^{\alpha_i}]^2\right\} - 1}\right].
\]
\end{corollary}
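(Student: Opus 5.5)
The bound $M_R(\bb{\alpha}) \geq 1$ is Theorem~\ref{thm:strong.GPI} applied to $\bb{X}$, since every marginal is standard normal. The divergence bound is a Cauchy--Schwarz (Hammersley--Chapman--Robbins type) argument with the test function $g(\bb{s}) = \prod_i s_i^{\alpha_i}$ on $[0,\infty)^n$.

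First, since $R$ is positive definite, $\bb{X}$ has a density. Hence $P$ has a density with respect to Lebesgue measure on $(0,\infty)^n$, obtained by folding over the orthants. $Q$ has the everywhere-positive density $\prod_i 2\phi(s_i)$ there, so $P \ll Q$ and $h = \rd P/\rd Q$ is well defined.

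If $\int h^2\,\rd Q = \infty$ there is nothing to prove, so assume it is finite. Note that $g \in L^2(Q)$, with
\[
\EE_Q[g] = \prod_i \EE[|Z|^{\alpha_i}], \qquad \EE_Q[g^2] = \prod_i \EE[|Z|^{2\alpha_i}].
\]
Then write
\[
\EE_P[g] - \EE_Q[g] = \int (h - 1)\bigl(g - \EE_Q[g]\bigr)\,\rd Q,
\]
which holds because $\int (h-1)\,\rd Q = 0$. Cauchy--Schwarz gives
\[
\bigl(\EE_P[g] - \EE_Q[g]\bigr)^2 \leq \Bigl(\int h^2\,\rd Q - 1\Bigr)\Var_Q(g).
\]

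Divide by $\EE_Q[g]^2$. The left side becomes $(M_R(\bb{\alpha}) - 1)^2$, because $\EE_P[g] = \EE[\prod_i |X_i|^{\alpha_i}]$. Also
\[
\frac{\Var_Q(g)}{\EE_Q[g]^2} = \prod_i \frac{\EE[|Z|^{2\alpha_i}]}{\EE[|Z|^{\alpha_i}]^2} - 1,
\]
by independence under $Q$.

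This denominator is strictly positive: by Jensen (or Cauchy--Schwarz), each ratio $\EE[|Z|^{2\alpha_i}]/\EE[|Z|^{\alpha_i}]^2$ exceeds $1$ because $|Z|^{\alpha_i}$ is nonconstant. Rearranging gives
\[
\int h^2\,\rd Q \geq 1 + \frac{(M_R(\bb{\alpha}) - 1)^2}{\prod_i \{\EE[|Z|^{2\alpha_i}]/\EE[|Z|^{\alpha_i}]^2\} - 1},
\]
and taking logarithms yields the claim.

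The only delicate points are the absolute continuity $P \ll Q$, which is where positive definiteness of $R$ is used, and the integrability of $g$ and $g^2$ under $Q$. Both are routine.
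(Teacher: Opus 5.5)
Your proposal is correct and follows essentially the same route as the paper. Both obtain $M_R(\bb{\alpha}) \geq 1$ from Theorem~\ref{thm:strong.GPI}, then apply Cauchy--Schwarz to $\EE_Q[(r-1)(f-m)]$ with the product-moment test function, using positive definiteness of $R$ only to guarantee $P \ll Q$ and nonconstancy of $f$ to ensure the denominator is positive.
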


\begin{proof}
The first assertion follows from Theorem~\ref{thm:strong.GPI}. If $D_2(P\Vert Q) = +\infty$, the claimed inequality is immediate. One may therefore assume that $D_2(P\Vert Q) < \infty$. Since $R$ is positive definite, both $P$ and $Q$ have strictly positive densities on $(0,\infty)^n$, and hence $P \sim Q$. Write $r = \rd P/\rd Q$, $f(\bb{x}) = \prod_i x_i^{\alpha_i}$, and $m = \EE_Q[f] = \prod_i \EE[|Z|^{\alpha_i}]$. Since $\EE_Q[r] = 1$, the Cauchy--Schwarz inequality gives
\begin{equation}\label{eq:to.rearrange}
\begin{aligned}
(M_R(\bb{\alpha}) - 1)^2 m^2
&= \left\{\EE_Q[(r - 1)f]\right\}^2 \\
&= \left\{\EE_Q[(r - 1)(f - m)]\right\}^2 \\
&\leq \EE_Q[(r - 1)^2]\Var_Q(f).
\end{aligned}
\end{equation}
Moreover,
\[
\EE_Q[(r - 1)^2] = \EE_Q[r^2] - 1 = \int \left(\frac{\rd P}{\rd Q}\right)^2 \rd Q - 1 = e^{D_2(P\Vert Q)} - 1,
\]
and
\[
\frac{\Var_Q(f)}{m^2} = \frac{\EE_Q[f^2]}{m^2} - 1 = \prod_{i=1}^n\frac{\EE[|Z|^{2\alpha_i}]}{\EE[|Z|^{\alpha_i}]^2} - 1.
\]
Since $\alpha_i > 0$ for every $i$, the random variable $f$ is nonconstant under $Q$, so $\Var_Q(f) > 0$. Rearranging \eqref{eq:to.rearrange} proves the claim.
\end{proof}

\begin{remark}
When $\alpha_i = 2m_i$ with $m_i \in \N$, the quantity $M_R(\bb{\alpha})$ is the hafnian in Corollary~\ref{cor:hafnian.inequalities} normalized by its diagonal benchmark. Corollary~\ref{cor:Renyi.total.correlation} therefore turns any strict hafnian excess into a quantitative lower bound on the R\'enyi total correlation of the Gaussian magnitudes.
\end{remark}

Mixed-sign moments combine inverse powers, which are singular near the origin, with positive powers, which emphasize the tails. Consequently, such moments are not controlled directly by a one-sided positive-moment inequality. \citet{LanOuimetSun2025} recently obtained a conditional-variance-adjusted lower bound contingent upon the corresponding lower-dimensional positive-side GPI. Theorem~\ref{thm:strong.GPI} removes that hypothesis in every dimension and makes the following estimate unconditional.

\begin{corollary}[Mixed-sign lower bounds]\label{cor:mixed.sign.lower}
Let $\bb{X} \sim \mathcal{N}(\bb{0},\Sigma)$, where $\Sigma$ is positive definite, and let $\emptyset \neq \mathcal{J} \subsetneq \{1,\ldots,n\}$. Let $\mathcal{J}^c = \{1,\ldots,n\} \setminus \mathcal{J}$, assume that $\beta_j \in [0,1)$ for $j \in \mathcal{J}$ and $\alpha_i \in (0,\infty)$ for $i \in \mathcal{J}^c$, and let $\Sigma_{\mathcal{I}\mathcal{I}'}$ denote the submatrix of $\Sigma$ with rows indexed by $\mathcal{I}$ and columns indexed by $\mathcal{I}'$. Define
\[
\Sigma/\Sigma_{\mathcal{J}\mathcal{J}} = \Sigma_{\mathcal{J}^c\mathcal{J}^c} - \Sigma_{\mathcal{J}^c\mathcal{J}}\Sigma_{\mathcal{J}\mathcal{J}}^{-1}\Sigma_{\mathcal{J}\mathcal{J}^c}.
\]
Then
\[
\begin{aligned}
\EE\left[\prod_{j \in \mathcal{J}} |X_j|^{-\beta_j}\prod_{i \in \mathcal{J}^c}|X_i|^{\alpha_i}\right]
&\geq \EE\left[\prod_{j \in \mathcal{J}} |X_j|^{-\beta_j}\right]\prod_{i \in \mathcal{J}^c}\left\{\frac{(\Sigma/\Sigma_{\mathcal{J}\mathcal{J}})_{ii}}{\Sigma_{ii}}\right\}^{\alpha_i/2}\EE\left[|X_i|^{\alpha_i}\right]\\
&\geq \prod_{j \in \mathcal{J}}\EE\left[|X_j|^{-\beta_j}\right]\prod_{i \in \mathcal{J}^c}\left\{\frac{(\Sigma/\Sigma_{\mathcal{J}\mathcal{J}})_{ii}}{\Sigma_{ii}}\right\}^{\alpha_i/2}\EE\left[|X_i|^{\alpha_i}\right].
\end{aligned}
\]
\end{corollary}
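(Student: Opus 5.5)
The plan is to condition on $\bb{X}_{\mathcal{J}}$ and apply Theorem~\ref{thm:strong.GPI} to the conditional Gaussian vector of the positive-exponent coordinates.

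\emph{Conditional law.} Given $\bb{X}_{\mathcal{J}} = \bb{x}_{\mathcal{J}}$, the vector $\bb{X}_{\mathcal{J}^c}$ is Gaussian with mean $\bb{\mu}(\bb{x}_{\mathcal{J}}) = \Sigma_{\mathcal{J}^c\mathcal{J}}\Sigma_{\mathcal{J}\mathcal{J}}^{-1}\bb{x}_{\mathcal{J}}$ and covariance $\Sigma/\Sigma_{\mathcal{J}\mathcal{J}}$, which is positive definite. So I write $\bb{X}_{\mathcal{J}^c} = \bb{\mu}(\bb{X}_{\mathcal{J}}) + \bb{W}$, with $\bb{W} \sim \mathcal{N}(\bb{0},\Sigma/\Sigma_{\mathcal{J}\mathcal{J}})$ independent of $\bb{X}_{\mathcal{J}}$. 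Finiteness of the left side is clear, since $\beta_j < 1$ makes $|x|^{-\beta_j}$ locally integrable and the conditional density of $\bb{X}_{\mathcal{J}}$ is bounded.

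\emph{Removing the mean.} The key step is the pointwise-in-$\bb{x}_{\mathcal{J}}$ bound
\[
\EE\Bigl[\prod_{i\in\mathcal{J}^c}|\mu_i + W_i|^{\alpha_i}\Bigr] \geq \EE\Bigl[\prod_{i\in\mathcal{J}^c}|W_i|^{\alpha_i}\Bigr]
\]
for every fixed $\bb{\mu}$. I would prove it by symmetrization. By the symmetry of $\bb{W}$, the left side equals the average of $\prod_i|\mu_i+W_i|^{\alpha_i}$ and $\prod_i|\mu_i - W_i|^{\alpha_i}$. So it suffices to show $\frac12\{g(\bb{\mu}) + g(-\bb{\mu})\} \ge g(\bb{0})$ in expectation, where $g(\bb{\mu}) = \EE\prod_i|\mu_i+W_i|^{\alpha_i}$. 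A cleaner route is to show that the map $t \mapsto \EE\prod_i|t\mu_i + W_i|^{\alpha_i}$ is even and nondecreasing in $t\ge0$. Writing $t\bb{\mu}+\bb{W}$ as a Gaussian with covariance $\Sigma/\Sigma_{\mathcal{J}\mathcal{J}}$ and mean $t\bb{\mu}$, one can use the Gaussian-mixture representation: $\bb{W}' = \bb{W} + tV\bb{\mu}$ with an independent scalar $V\sim\mathcal{N}(0,1)$ is centered. However, this changes the covariance, so it does not work directly.

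I expect this mean-removal step to be the main obstacle. I would therefore follow the route of \citet{LanOuimetSun2025}, which is presumably exactly where their conditional argument invokes the lower-dimensional positive-side GPI. Apply Theorem~\ref{thm:strong.GPI} directly to the centered vector $\bb{W}$ to get
\[
\EE\Bigl[\prod_i|W_i|^{\alpha_i}\Bigr] \geq \prod_i \EE|W_i|^{\alpha_i} = \prod_i \bigl\{(\Sigma/\Sigma_{\mathcal{J}\mathcal{J}})_{ii}/\Sigma_{ii}\bigr\}^{\alpha_i/2}\EE|X_i|^{\alpha_i}.
\]
For the mean shift itself, I would argue that for a centered Gaussian $\bb{W}$ and even, coordinatewise quasi-convex-from-origin functions, a shift decreases nothing. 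The cleanest tool is Anderson's inequality applied to the sets $\{\prod_i|w_i|^{\alpha_i} \le s\}$. These are symmetric but not convex, so Anderson does not apply directly. Instead I would condition further: use the one-dimensional fact that $\EE|m + \sigma Z|^{\alpha} \ge \EE|\sigma Z|^{\alpha}$, together with the layer structure, following Lan--Ouimet--Sun's argument verbatim with Theorem~\ref{thm:strong.GPI} supplying the hypothesis they assumed.

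\emph{Assembling.} Multiply the conditional bound by $\prod_{j}|X_j|^{-\beta_j}$ and take expectations (Tonelli) to obtain the first inequality. The second inequality is the known negative-exponent GPI $\EE\prod_j|X_j|^{-\beta_j} \ge \prod_j \EE|X_j|^{-\beta_j}$ for $\beta_j\in[0,1)$, which I would cite from the negative-exponent literature listed in the introduction (e.g.\ \cite{MR3278931}).
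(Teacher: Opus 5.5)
Your whole argument rests on the pointwise bound $\EE\bigl[\prod_{i\in\mathcal{J}^c}|\mu_i+W_i|^{\alpha_i}\bigr]\geq\EE\bigl[\prod_{i\in\mathcal{J}^c}|W_i|^{\alpha_i}\bigr]$ for every fixed $\bb{\mu}$. That bound is false, so no symmetrization, Anderson-type or ``layer'' argument can prove it.

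Here is a counterexample. Take $n=3$, $\mathcal{J}=\{3\}$, $\alpha_1=\alpha_2=1$ and $X_3\sim\mathcal{N}(0,1)$. Set $X_i=X_3+W_i$ for $i\in\{1,2\}$, where $(W_1,W_2)$ is centered Gaussian, independent of $X_3$, with unit variances and correlation $\rho\in(-1,1)$. Then $\Sigma$ is positive definite, the conditional covariance is that of $(W_1,W_2)$, and the conditional mean at $x_3=1$ is $(1,1)$. In the limit $\rho=-1$, where $W_1=-W_2=Z$, one gets $\EE|(1+W_1)(1+W_2)|=\EE|1-Z^2|=4e^{-1/2}/\sqrt{2\pi}\approx 0.968<1=\EE|W_1W_2|$. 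By continuity, the strict reverse inequality persists for $\rho$ close to $-1$ and for $x_3$ near $1$. The same computation shows that $t\mapsto\EE|t^2-Z^2|$ is not nondecreasing on $[0,\infty)$, so your monotonicity-in-$t$ idea fails as well.

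Only the inequality averaged against $\prod_j|x_j|^{-\beta_j}$ times the law of $\bb{X}_{\mathcal{J}}$ is true, and your proposal gives no mechanism for that average. Your guess that \citet{LanOuimetSun2025} pass through this pointwise step therefore cannot be right.

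The paper's proof is only a citation. The first inequality is the lower-bound theorem of \citet{LanOuimetSun2025}, whose positive-exponent GPI hypothesis is now supplied by Theorem~\ref{thm:strong.GPI}. The second is the nonpositive-exponent GPI of \citet{MR3278931}, which you cite correctly. Invoking that theorem as a black box would have matched the paper.

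For a self-contained proof, avoid mean shifts altogether and tilt instead:
\begin{itemize}
\item For $\beta_j\in(0,1)$, write $|x|^{-\beta_j}=\{2^{1-\beta_j/2}/\Gamma(\beta_j/2)\}\int_0^\infty t^{\beta_j-1}e^{-t^2x^2/2}\,\rd t$ and apply Tonelli.
\item For fixed $\bb{t}$, let $D$ be diagonal with entry $t_j^2$ at each $j\in\mathcal{J}$ with $\beta_j>0$ and zero elsewhere. With $h(\bb{x}_{\mathcal{J}^c})=\prod_{i\in\mathcal{J}^c}|x_i|^{\alpha_i}$, one has $\EE[e^{-\bb{X}^{\top}D\bb{X}/2}h(\bb{X}_{\mathcal{J}^c})]=\EE[e^{-\bb{X}^{\top}D\bb{X}/2}]\,\EE[h(\bb{Y}_{\mathcal{J}^c})]$, where $\bb{Y}\sim\mathcal{N}(\bb{0},(\Sigma^{-1}+D)^{-1})$.
\item Since $\bb{Y}$ is still centered, Theorem~\ref{thm:strong.GPI} applies to $\bb{Y}_{\mathcal{J}^c}$.
\item With $\Omega=\Sigma^{-1}$, the block-inverse formula gives the $\mathcal{J}^c\mathcal{J}^c$ block of $(\Omega+D)^{-1}$ as $\{\Omega_{\mathcal{J}^c\mathcal{J}^c}-\Omega_{\mathcal{J}^c\mathcal{J}}(\Omega_{\mathcal{J}\mathcal{J}}+D_{\mathcal{J}\mathcal{J}})^{-1}\Omega_{\mathcal{J}\mathcal{J}^c}\}^{-1}\succeq\Omega_{\mathcal{J}^c\mathcal{J}^c}^{-1}=\Sigma/\Sigma_{\mathcal{J}\mathcal{J}}$. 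Hence $\Var(Y_i)\geq(\Sigma/\Sigma_{\mathcal{J}\mathcal{J}})_{ii}$ for every $i\in\mathcal{J}^c$.
\item Integrating back over $\bb{t}$ gives exactly the first inequality.
\end{itemize}
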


\begin{proof}
The first inequality follows from the lower-bound theorem of \citet{LanOuimetSun2025}, whose lower-dimensional positive-exponent GPI hypothesis is supplied by Theorem~\ref{thm:strong.GPI}. The second inequality follows from the nonpositive-exponent GPI of \citet{MR3278931}.
\end{proof}

\begin{remark}
For $i \in \mathcal{J}^c$, the Schur-complement entry $(\Sigma/\Sigma_{\mathcal{J}\mathcal{J}})_{ii}$ is the conditional variance $\Var(X_i \mid (X_j)_{j \in \mathcal{J}})$. The ratio inside each correction factor in Corollary~\ref{cor:mixed.sign.lower} is therefore the fraction of the marginal variance that remains after regression on the nonpositive-exponent block. It equals $1$ under block independence. In that case, the mixed moment on the left-hand side factors into the product of the two block moments, whereas the first lower bound becomes the moment of the nonpositive-exponent block times the product of the positive marginal moments.
\end{remark}

\section*{Statement of AI use}
\addcontentsline{toc}{section}{Statement of AI use}

An early version of the proof of Theorem~\ref{thm:strong.GPI}, which did not include the characterization of equality in terms of independence, was generated entirely by ChatGPT 5.6 Sol (Work with Max effort) in response to a prompt by Dylan Greaves. The prompt and output are available at \url{https://chatgpt.com/share/6a5ea69b-1648-83e8-80b1-014ae0b1003c}. Dylan Greaves formalized this early version of the proof in Lean using Codex; see \url{https://github.com/dylgre/gaussian-product-inequality}. Fr\'ed\'eric Ouimet reworked the proof of Theorem~\ref{thm:strong.GPI} into its present form to improve the exposition and add the characterization of equality in terms of independence. Corollaries~\ref{cor:weighted.products.linear.functionals}, \ref{cor:logarithmic.covariance}, and \ref{cor:Renyi.total.correlation} were identified and written using ChatGPT 5.6 Sol (Pro). Independently, Fr\'ed\'eric Ouimet derived Corollaries~\ref{cor:real.linear.polarization}, \ref{cor:spherical.product.moments}, and \ref{cor:hafnian.inequalities} based on his own knowledge of the problem. ChatGPT 5.6 Sol (Pro) was used throughout to assist with text editing.



\section*{Disclosure statement}
\addcontentsline{toc}{section}{Disclosure statement}

No potential conflict of interest was reported by the authors.

\section*{Acknowledgments}
\addcontentsline{toc}{section}{Acknowledgments}

The authors thank Donald Richards for his comments on an early version of this paper.

\section*{Funding}
\addcontentsline{toc}{section}{Funding}

Fr\'ed\'eric Ouimet is supported by the Natural Sciences and Engineering Research Council of Canada (NSERC) through Discovery Grant RGPIN-2026-04471 and Discovery Launch Supplement DGECR-2026-00449.

\section*{References}
\addcontentsline{toc}{section}{References}

\setlength{\bibsep}{0pt plus 0ex}

\bibliographystyle{plainnat}
\bibliography{bib}

\end{document}